\theoremstyle{definition}
\newtheorem{theorem}{Theorem}
\newtheorem{lemma}[theorem]{Lemma}
\newtheorem{definition}[theorem]{Definition}
\newtheorem{corollary}[theorem]{Corollary}
\newtheorem{example}[theorem]{Example}
\newcommand{\R}{\mathbb{R}}
\newcommand{\bdd}{\partial}
\newcommand{\iso}{\cong}
\newcommand{\id}{\textrm{id}}
\newcommand{\im}{\textrm{im}\,}
\newcommand{\calb}{\mathcal{B}}
\newcommand{\calf}{\mathcal{F}}
\newcommand{\calg}{\mathcal{G}}
\newcommand{\calh}{\mathcal{H}}
\newcommand{\cals}{\mathcal{S}}
\newcommand{\sfe}{\mathsf{E}}
\newcommand{\sff}{\mathsf{F}}
\newcommand{\sfg}{\mathsf{G}}
\newcommand{\sfh}{\mathsf{H}}
\newtheorem*{theorem2}{Theorem}
\title{Unified Origami Kinematics via Cosheaf Homology}
\author{Zoe Cooperband \and Robert Ghrist}
\begin{document}

\begin{abstract}
We establish a novel local-global framework for analyzing rigid origami mechanics through cosheaf homology, proving the equivalence of truss and hinge constraint systems via an induced linear isomorphism. This approach applies to origami surfaces of various topologies, including sheets, spheres, and tori. By leveraging connecting homomorphisms from homological algebra, we link angular and spatial velocities in a novel way. Unlike traditional methods that simplify complex closed-chain systems to re-constrained tree topologies, our homological techniques enable simultaneous analysis of the entire system. This unified framework opens new avenues for homological algorithms and optimization strategies in robotic origami and beyond.
\end{abstract}

\maketitle

\section{Framing}
\label{sec:intro}

The theory of kinematic chains in robotics is classical and well-developed~\cite{lynch2017modern}. Linear and tree-like mechanisms are topologically {\em contactable}, meaning simple methods such as recursive base-to-end equations can be used in analysis~\cite{muller2020n}. In contrast, linkages and closed chain systems have body and joint connections that loop back upon themselves, requiring more advanced analysis methods~\cite{muller2018kinematic}. 

The passage from 1-d locally chain-like mechanisms to locally 2-d surface-like origami configurations adds numerous degrees of complexity~\cite{lynch2017modern, tachi2010geometric}. The impetus for resolving such comes from key applications in contemporary robotics, architecture, meta-materials, and more \cite{fonsecaOverviewMechanicalDescription2022}.

This paper concerns the algebraic properties of \textit{rigid origami surfaces}. We consider an origami surface to consist of rigid faces (or panels) connected by edges (or folds, hinges, joints) in a surface configuration. Boundary components -- cuts or holes -- may be present, but such surfaces will be required to be locally-planar and globally orientable. Corners or vertices where folds intersect can be modeled as spherical linkages, imposing closed loop constraints.

It is common knowledge in the mathematical origami community that there are several first order linear models constituting the kinematic \textit{Pfaffian constraints} of a rigid origami system. Recall that if $q$ is a collection of state variables, a Pfaffian constraint is encoded by a constraint matrix ${\bf A}(q)$ specifying the equation
\begin{equation}
    {\bf A}(q)\dot{q} = 0 .
\end{equation}
Such a linear constraint on velocities is also called a \textit{holonomic} constraint in the state variables, indicating the time-evolution of the system can follow from numerically integrating these state velocities $\dot{q}$ \cite{zhu2022review}. These Pfaffian constraints indicate an infinitesimal folding path in $q$ to first order. This does not capture full folding behavior; for instance, the flat state of an origami sheet can be highly degenerate. In this paper we will not consider higher-order analysis, such as the zero-area criterion for second-order foldability \cite{demaineZeroAreaReciprocalDiagram2016} to detect such pathologies.

There are several approaches to modeling rigid origami kinematics, each assigning velocity variables to different cells. In the \textbf{spatial body model}, each panel is assigned three angular and three linear velocity DOF, these restricted by five degrees of constraints at hinges. In the \textbf{hinge model}, the angular velocities of the hinges $\dot{\theta}$ are free variables, constrained around each vertex. In a third model, the \textbf{truss model}, bars are added within faces so that the linear velocities of vertices suffice to describe the full origami motion: see Figure~\ref{fig:overview}.

\begin{figure}
    \centering
    \includegraphics{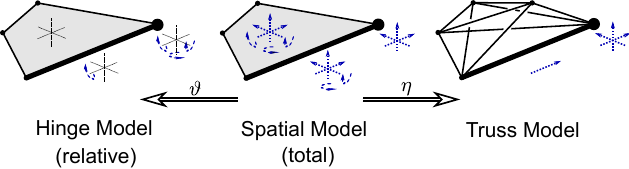}
    \caption{Kinematic data assignments of different origami kinematic models are pictured over a sample face, edge, and vertex. The hinge and spatial models have ``complementary'' cellular geometric data assignments in the sense that one has data where the other lacks it.}
    \label{fig:overview}
\end{figure}

All three of these kinematic origami models seem clearly interconnected and their solutions transferable by-hook-or-by-crook. A translation between spatial and hinge velocity coordinates is already a common procedure in the robotics literature~\cite{jain2010robot, jain2011graph}. 

\subsection{Brief Statement of Results}
\label{sec:inbrief}

The principal result of this paper is a formal homological framework encompassing all three models, with explicit and natural algebraic equivalences between them. These equivalences hold not merely in the linear-chain or tree settings, but in the full closed-chain and surface systems, without recourse to cutting and unfolding. For brevity, we state simplified versions of the results using the notation to be developed in Sections~\ref{sec:cosheaves}-\ref{sec:relations}.

\begin{theorem2}[\ref{thm:holes}]
    Fix $(X, p)$ an arbitrary oriented origami surface (possibly with boundary). The isomorphism $\vartheta^+|_{\ker\iota_*}$ sends solutions of the hinge model satisfying $6\dim (H_1 X)$ additional constraints (encoded by $\iota_*$), to solutions of the spatial model up to global translation and rotation. 
\end{theorem2}

Formally, the map $\vartheta$ is a \textit{connecting homomorphism} and $\vartheta^+$ is its \textit{Moore-Penrose pseudo-inverse}. This map $\vartheta^+$ takes systems of infinitesimal hinge motions and converts these to minimal rigid body motion of the origami panels in the least-squares sense. The vector space $H_1 X$ has a basis of homology loops of $X$ - loops over the surface that cannot be contracted to a point (such as a loop drawn around a puncture). The map $\iota_*$ in Theorem~\ref{thm:holes} measures the spatial alignment error around each loop, and $\ker\iota_*$ is the space of ``globally consistent solutions'' to the hinge model. We demonstrate the abstract result on serial chains in Section~\ref{sec:classical}.

The connection to the truss kinematic model is completed by:

\begin{theorem2}[\ref{thm:eta_hom}]
    Suppose $(X, p)$ is any oriented origami surface. The isomorphism $\eta$ maps solutions of the spatial model to solutions of the truss model.
\end{theorem2}

The map $\eta$ above is defined in Lemma~\ref{lem:eta_def} and translates motion of a rigid panel to the motion of vertices along the panel boundary. Theorem~\ref{thm:eta_hom} proves that both variables and constraints are translated faithfully. The domains and codomains of these maps $\vartheta$ and $\eta$ are sketched in Figure~\ref{fig:overview}. The above two theorems combine to yield:

\begin{corollary}\label{thm:full}
    Suppose $(X, p)$ is an origami surface. Then $\eta \vartheta^+|_{\ker \iota_*}$ is a linear isomorphism between solutions of the hinge model and the truss model up to global translation and rotation.
\end{corollary}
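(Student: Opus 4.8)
The plan is to present the composite as a concatenation of two linear isomorphisms, where the only genuine content is that the second map descends to the quotient on which the first one lands. Theorem~\ref{thm:holes} already furnishes a linear isomorphism
\[
\vartheta^+|_{\ker\iota_*}\colon \ker\iota_* \xrightarrow{\ \iso\ } \{\text{spatial solutions}\}\big/ G_{\mathrm{sp}},
\]
where $G_{\mathrm{sp}}\iso\mathfrak{se}(3)$ is the $6$-dimensional subspace of spatial velocity fields in which every panel is assigned one and the same infinitesimal rigid motion; this is exactly the ``up to global translation and rotation'' indeterminacy. Theorem~\ref{thm:eta_hom} supplies a linear isomorphism $\eta$ from spatial solutions to truss solutions on the \emph{un-quotiented} spaces. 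So the single thing I must verify is that $\eta$ carries $G_{\mathrm{sp}}$ onto the corresponding subspace $G_{\mathrm{tr}}$ of truss solutions in which every vertex moves by one global infinitesimal rigid motion. Granting this, $\eta$ descends to an isomorphism $\bar\eta$ of quotients, and $\bar\eta\circ(\vartheta^+|_{\ker\iota_*})$ is the claimed map.

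First I would recall the definition of $\eta$ from Lemma~\ref{lem:eta_def}: for each panel it reads off the induced velocities of the vertices on that panel's boundary from the panel's rigid-body velocity. I would then take a generator of $G_{\mathrm{sp}}$, namely the assignment of a fixed $\xi\in\mathfrak{se}(3)$ to every panel, and apply $\eta$ to it. Since $\eta$ operates panel-by-panel through this same rigid-body-to-boundary rule, the output assigns to each vertex the velocity of $\xi$ evaluated at that vertex's position, consistently across all incident panels (this consistency being precisely what makes the image a well-defined truss velocity). That output is exactly the global rigid motion of the whole vertex set determined by $\xi$, so $\eta(G_{\mathrm{sp}})\subseteq G_{\mathrm{tr}}$.

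For the reverse inclusion I would invoke injectivity of $\eta$: because $\eta$ is an isomorphism and $G_{\mathrm{sp}}$ is $6$-dimensional, $\eta(G_{\mathrm{sp}})$ is a $6$-dimensional subspace of $G_{\mathrm{tr}}$; but $G_{\mathrm{tr}}$ is itself the image of the single $6$-dimensional algebra $\mathfrak{se}(3)$ acting globally, so $\dim G_{\mathrm{tr}}\le 6$ and hence $\eta(G_{\mathrm{sp}})=G_{\mathrm{tr}}$. With this compatibility established, the standard fact that a linear isomorphism mapping one distinguished subspace onto another descends to an isomorphism of the quotients yields $\bar\eta\colon \{\text{spatial}\}/G_{\mathrm{sp}}\xrightarrow{\iso}\{\text{truss}\}/G_{\mathrm{tr}}$. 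Composing with $\vartheta^+|_{\ker\iota_*}$ gives the linear isomorphism $\eta\,\vartheta^+|_{\ker\iota_*}\colon \ker\iota_*\to\{\text{truss}\}/G_{\mathrm{tr}}$, which is the assertion of the corollary.

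I expect the main obstacle to be the bookkeeping in the compatibility step, rather than any numerical estimate: one must pin down that the subspace singled out as ``global translation and rotation'' in the spatial model is sent exactly onto the global-motion subspace of the truss model, including potentially degenerate configurations (such as a flat sheet, whose vertex positions are coplanar) where one might worry that $\mathfrak{se}(3)$ fails to inject on the truss side. As noted above, injectivity of $\eta$ forces $G_{\mathrm{tr}}$ to be genuinely $6$-dimensional and resolves this degeneracy automatically, so the difficulty is conceptual: correctly identifying the two distinguished subspaces and confirming the panel-by-panel consistency of $\eta$ on them, after which the conclusion is a routine composition of isomorphisms.
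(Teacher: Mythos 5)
Your proposal is correct and takes essentially the same route as the paper: Corollary~\ref{thm:full} is obtained precisely by composing the isomorphism of Theorem~\ref{thm:holes} with that of Theorem~\ref{thm:eta_hom}. The paper asserts this composition without further argument, so your explicit check that $\eta$ carries the global-motion subspace $\pi_* H_2 \calb \iso se(3) \subset H_2\cals$ onto the truss model's global rigid motions (and therefore descends to an isomorphism of quotients) supplies, soundly, the one detail the paper leaves implicit.
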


This result is significant. It is often assumed that the hinge and truss models provide equivalent Pfaffian constraints for rigid origami. To the authors' knowledge, Corollary~\ref{thm:full} is the first explicit proof of this result (though distilled from the first author's  thesis~\cite{cooperbandCellularCosheavesGraphic2024}). More important than the existence of a formal proof is its nature: by setting up the proper structures, the equivalence between models is a simple matter of reading off a natural isomorphism that arises not from careful tweaking but from universal constructs in homological algebra.

Homological methods in applied mathematics are a topic of recent interest, though with historical antecedents in the 20th century, including abstract formulations of Kirchhoff's laws ~\cite{davies1981kirchhoff,smale1972mathematical}, fixed point theorems in Economics \cite{urai2010fixed}, index theory in dynamical systems \cite{milnor1963morse}, and spline geometry \cite{billera1988homology}. 
The most well-known contemporary uses of homological methods focus on persistent homology in topological data analysis, with applications to genetics \cite{nicolau2011topology, chan2013topology}, neuroscience \cite{sizemore2019importance, yoon2024tracking}, material science~\cite{kramar2013persistence, onodera2019understanding}, sensor networks~\cite{de2007coverage}, path planning~\cite{bhattacharya2015persistent}, and more. The specific methods of this paper (cellular cosheaves and their dual sheaves) are more recent still, applied to network coding \cite{ghrist2011network}, distributed optimization \cite{hansen2019distributed}, consensus \cite{hansen2020laplacians, RiessLatticeTarski}, social information dynamics \cite{hansen2021opinion, riess2022diffusion}, and neural networks \cite{hansen2020sheaf, bodnar2022neural}.

The homological techniques that fuel this paper are not well-known in all communities. To provide the reader with a quick summary without interrupting the flow of the paper, the basics of what is needed is collected in the Appendix (expressed in the language of l inear algebra as opposed to the more general module-theoretic approach of standard texts). The reader familiar with cellular sheaves or cosheaves can skip this and proceed with the following recap of the classical methods and notation for simple origami structures. 

\subsection{Serial Chain Methods}
\label{sec:classical}
Theorem~\ref{thm:holes} is abstract; in simple settings this describes a well known procedure. To demonstrate, we detail the Theorem over an origami in serial chain configuration, pictured in Figure~\ref{fig:serial}.

\begin{figure}
    \centering
    \includegraphics[scale = 0.8]{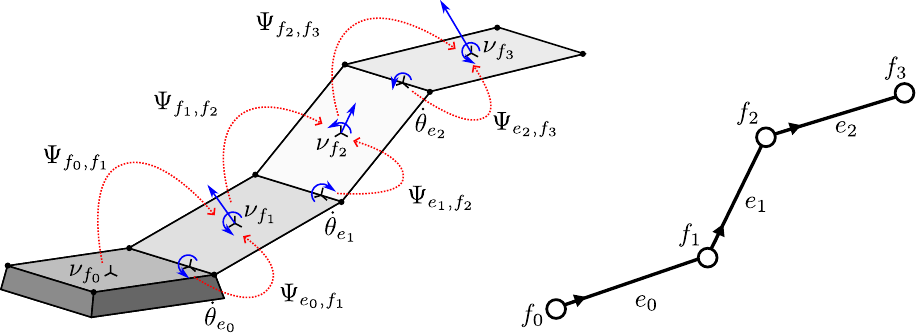}
    \caption{An origami serial chain. Spatial velocities, angular velocities, and rigid body operators are pictured (left) . The dual graph (right) is directed along the recursive equation~\eqref{eq:recurrence}. This graph is acyclic, indicating a open-chain system.}
    \label{fig:serial}
\end{figure}

\begin{definition}[Origami]
\label{def:origami}
    A {\it rigid origami surface} consists of panels/faces $F = \{f\}$ joined at their boundaries by straight creases/ hinges/edges $E = \{e\}$ and vertex joints $V = \{v\}$. These abstract cells join to form a cellular surface $X$ with a transitive incidence relation $v \lhd e$ and $e\lhd f$ between adjacent/incident cells. The surface $X$ is {\it realized} by a map $p: X \to \R^3$ that assigns each cell a {\it centroid} coordinate in $\R^3$. The pair $(X,p)$ is a (non-degenerate) {\it origami surface}\footnote{ Our definition of an origami surface is general and allows for systems not typically considered as origami. For instance, face panels $\{f\}$ need not be flat; if ``wavy'' faces are undesired this additional constraint can be added afterward. Moreover, this Definition~\ref{def:origami} does not specify that the origami surface originate from a flat state or that interior angles add to $2\pi$. Self-intersecting panels are allowed.
} if $p$ satisfies
    \begin{equation}\label{eq:nondegen}
        \text{dim span} \{p_v : v \lhd c\} = \dim c + 1,
    \end{equation}
meaning coordinates of each $i$ cell span an $(i+1)$-dimensional subspace of $\R^3$ (edges and faces have non-zero length and area).
\end{definition}

To explicitly localize state variables, let $\sff$ denote a reference frame based at a point in Euclidean space $p_\sff\in \R^3$ fixed to a rigid body $f$. A {\it spatial velocity} $\nu_\sff$ in the frame $\sff$ on the body $f$ is a $6$-dimensional real vector $[\omega,\beta]_\sff^\top$ where $\omega$ is the angular velocity and $\beta$ is the linear velocity. Spatial velocity vectors span the six-dimensional lie algebra $se(3)$ of the special Euclidean group $SE(3)$. When $f$ has only the one frame $\sff$ of note, we write $\nu_f$ for $\nu_\sff$. Although we will not use this representation, it is common to represent a spatial velocity in {\it homogeneous coordinates} where $\nu$ is represented by the $4\times 4$ matrix
\begin{equation}
\label{eq:homogeneous}
    \begin{bmatrix}
        \omega_\times & \beta \\
        {\bf 0} & 1
    \end{bmatrix}.
\end{equation}
Here $\omega_\times$ refers to the cross product operator of $\omega$, the $3\times 3$ anti-symmetric matrix
\begin{equation}
\label{eq:skew_symm}
    \begin{bmatrix}
        \omega_1 \\
        \omega_2 \\
        \omega_3
    \end{bmatrix}_\times
    =
    \begin{bmatrix}
        0 & -\omega_3 & \omega_2 \\
        \omega_3 & 0 & -\omega_1 \\
        -\omega_2 & \omega_1 & 0
    \end{bmatrix}
\end{equation}
where $\omega_i$, $i=1,2,3$ are the coordinates of the angular velocity. Here $\omega_\times a$ is the cross product $\omega\times a$ for $a\in \R^3$.

There is a simple relationship between velocities measured at different points across a rigid body. We define {\it rigid body operators} $\Psi$ as $6\times 6$ matrices acting on spatial velocities
\begin{small}\begin{equation}
    \label{eq:rigid_body_op_kinematic}
    \Psi_{\sff,\sfg}
    \begin{bmatrix}
        \omega \\
        \beta
    \end{bmatrix}_\sff
    =
    \begin{bmatrix}
        {\bf R}_{\sff,\sfg} & {\bf 0} \\
        {\bf 0} & {\bf R}_{\sff,\sfg}
    \end{bmatrix}
    \begin{bmatrix}
        {\bf I} & {\bf 0} \\
        (p_\sff - p_\sfg)_\times & {\bf I}
    \end{bmatrix}
    \begin{bmatrix}
        \omega \\
        \beta
    \end{bmatrix}_\sff
    =
    \begin{bmatrix}
        {\bf R}_{\sff,\sfg} \omega \\
        {\bf R}_{\sff,\sfg} (\omega \times (p_\sfg - p_\sff) + \beta)
    \end{bmatrix}_\sfg,
\end{equation}\end{small}
transferring a velocity at $\sff$ to the velocity at $\sfg$. Here ${\bf R}_{\sff,\sfg}$ is a rotation matrix between the two frames, which we assume to be the identity matrix in this paper to simplify infinitesimal analysis. It quickly follows that rigid body operators have the algebraic composition property $\Psi_{\sfg,\sfh}\Psi_{\sff,\sfg} = \Psi_{\sff,\sfh}$ and inverse property $(\Psi_{\sff,\sfg})^{-1} = \Psi_{\sfg,\sff}$.

A serial chain of folding panels is a chain of rigid bodies $\{f_0, \dots, f_n\}$, linked by single DOF joints $\{e_1, \dots, e_n\}$ modeling a robotic arm. Each body $f_i$ and hinge $e_i$ is assigned a frame $\sff_{f_i}$ and $\sfe_{e_i}$, the latter located along the hinge line of action. The base of the arm $f_0$ is fixed in space while each hinge attains an instantaneous angular velocity $\dot{\theta}_{e_i} = \dot{\theta}_i$. Knowing these hinge angular velocities, the body velocities $\nu_{f_i} = \nu_i$ can be found by the well known recursive equation
\begin{equation}
\label{eq:recurrence}
    \nu_{i+1} = \Psi_{f_i, f_{i+1}} \nu_i + \Psi_{e_i, f_{i+1}} \iota_{e_i}\dot{\theta}_i
\end{equation}
with initial conditions $\nu_0 = 0$~\cite{jain2010robot, rodriguez1992spatial}. In \eqref{eq:recurrence} the frames at $f_i,f_{i+1},e_i$ are assumed to be attached to the cells. The maps $\Psi_{f_i, f_{i+1}}$ and $\Psi_{e_i, f_{i+1}}$ above are spatial operators that transfer a spatial velocity at one frame to the spatial velocity at another. The map $\iota_{e_i}$ is known as the {\it hinge operator} and embeds angular velocity values $\R$ into the subspace parallel to the hinge axis. For instance, if the hinge axis aligns along the first $x$ axis, then $\iota_{e_i}$ is the $6\times 1$ matrix $[1,0,0,0,0,0]^\top$.

Let $\iota = {\bf Diag}(\iota_{e_i})$ denote the size $6n \times n$ block diagonal {\it global hinge matrix} with matrices $\iota_{e_i}$ along the diagonal. Furthermore let
\begin{equation}
\label{eq:rigid_matrix}
\Psi = 
    \begin{bmatrix}
        \Psi_{e_1,f_1} & {\bf 0} & {\bf 0} & \hdots & {\bf 0} \\
        \Psi_{e_1,f_2} & \Psi_{e_2,f_2} & {\bf 0} & \hdots & {\bf 0} \\
        \Psi_{e_1,f_3} & \Psi_{e_2,f_3} & \Psi_{e_3,f_3} & \hdots & {\bf 0} \\
        \vdots & \vdots & \vdots & & \vdots \\
        \Psi_{e_1,f_n} & \Psi_{e_2,f_n} & \Psi_{e_3,f_n} & \hdots & \Psi_{e_n,f_n}
    \end{bmatrix}
\end{equation}
be the {\it global rigid body operator} and let ${\bf D} = \bf{I} \Psi \iota$ where ${\bf I}$ is an identity matrix; the operator~\ref{eq:rigid_matrix} adds all appropriately transformed spatial hinge velocities from the base of the serial chain to the tip. The combination of all recursive equations~\eqref{eq:recurrence} for all body indices $i$ combine to form the matrix equation $\nu = {\bf D} \dot{\theta}$ where the global vectors $\nu$ and $\dot{\theta}$ are comprised of stacked local variables $\nu_{f_i}$ and $\dot{\theta}_{e_i}$ (excluding the term $\nu_{f_0} = 0$).

We observe that equation~\eqref{eq:recurrence} can be rearranged into the form
\begin{equation}
\label{eq:connecting}
    \iota_{e_i}\dot{\theta}_{e_i} = \Psi_{f_{i+1},e_i} \nu_{f_{i+1}} - \Psi_{f_i, e_i} \nu_{f_i}.
\end{equation}
Encoding equation~\eqref{eq:connecting} at all hinges, the global matrix $\Psi$ has inverse
\begin{equation}
\label{eq:rigid_inv}
   \Psi^{-1} =
    \begin{bmatrix}
        \Psi_{f_1,e_1} & {\bf 0} & {\bf 0} & \hdots & {\bf 0} \\
        -\Psi_{f_1,e_2} & \Psi_{f_2,e_2} & {\bf 0} & \hdots & {\bf 0} \\
        0 & -\Psi_{f_2,e_3} & \Psi_{f_3,e_3} & \hdots & {\bf 0} \\
        \vdots & \vdots & \vdots & & \vdots \\
        {\bf 0} & {\bf 0} & {\bf 0} & \hdots & \Psi_{v_n,e_n}
    \end{bmatrix}
\end{equation}
and we may rearrange the governing matrix equation to $\iota \dot{\theta} = \Psi^{-1} {\bf I} \nu$.

The operator ${\bf D}$ is injective but not invertable. Its {\it Moore-Penrose pseudoinverse} ${\bf D}^+ = \iota^\top \Psi^{-1} {\bf I}$ is the closest generalization of the inverse. Here ${\bf D}^+$ is surjective and whenever $\nu$ satisfies the equation $\nu = {\bf D} \dot{\theta}$ it follows that $\dot{\theta} = {\bf D}^+ \nu$.

Here are the critical observations. The matrix of rigid body operators~\eqref{eq:rigid_inv} is a {\it boundary operator}, where the boundary edges of each face $e_i,e_{i+1} \lhd f_i$ are reflected in each column of $\Psi^{-1}$. The full matrix ${\bf D}^+$ is a {\it connecting homomorphism}, a composition of operations taking the preimage by a surjective map ${\bf I}$, then applying the boundary operator $\Psi^{-1}$ and finally taking the preimage of an injective map $\iota$. The kinematic solutions $(\nu, \dot{\theta})$ satisfying $\nu = {\bf D} \dot{\theta}$ must be homology classes $\nu\in H_2 \cals$ and $\dot{\theta}\in H_1\calh$ with connecting homomorphism ${\bf D}^+: H_2 \cals \to H_1 \calh$ (all to be defined later). This formalism allows different algebraic models to be integrated with each other as well as with the underlying spatial topology.

While in this serial chain example the rigid body operator matrix~\eqref{eq:rigid_matrix} and its inverse~\eqref{eq:rigid_inv} are well known~\cite{jain2010robot}, dividends are evident when moving to more complex topologies. The connecting map (here ${\bf D}^+$) is easily computed for any rigid body system, then with pseudoinverse (${\bf D}^{++}$) dictating the transition from angular hinge variables to spatial $\R^6$ variables.

\section{Rigid Origami Models}
\label{sec:classical_models}

In this section we derive the constraints encoding the different origami models. These serve as a starting points to the cosheaf representations of the underlying algebraic structures.

\subsection{Hinge Model}
\label{sec:hinge_model}

The first central method in kinematic analysis is the hinge model, encoding the {\it loop closure constraint} developed by Belcastro and Hull \cite{belcastroModellingFoldingPaper2002} and Tachi \cite{tachiSimulationRigidOrigami2009} among other authors.

Traversing along a loop around a vertex of an origami surface, each crease constitutes a change in local coordinates between adjacent faces. Composing these transformations, the frame is conserved upon return to the start of the loop. From edge to edge, the frame is transformed by a $3\times3$ orthogonal rotation matrix ${\bf R}_i(\theta_i)$ about the axis parallel to the edge $e_i$ by angle $\theta_i$, these rotation matrices being elements of the Lie group $SO(3)$. In a loop around a vertex $v$ with $m$ incident edges, these rotation matrices compose to the identity transformation:
\begin{equation}\label{eq:rot_id}
    {\bf R}(\theta_0, \dots, \theta_{m-1})
    = {\bf R}_{m-1}(\theta_{m-1}) \dots {\bf R}_1(\theta_1){\bf R}_0(\theta_0)
    = {\bf I}
\end{equation}

Taking the derivative of equation~\eqref{eq:rot_id} with respect to time, one finds
\begin{equation}\label{eq:rot_deriv}
    \frac{d{\bf R}}{dt}
    = \sum_{i=0}^{m-1} \frac{\partial {\bf R}}{\partial \theta_i} \frac{\partial \theta_i}{\partial t}
    = {\bf 0}
\end{equation}
where the terms $\partial \theta_i / \partial t = \dot{\theta_i}$ are the instantaneous angular velocities of the hinges $e_i$. Because ${\bf R} = {\bf I}$ each matrix $\partial {\bf R} / \partial \theta_i$ is skew symmetric of form~\eqref{eq:skew_symm}, these being elements of the Lie algebra $so(3)$~\cite{tachiSimulationRigidOrigami2009}. 

The matrix equation~\eqref{eq:rot_deriv} condenses to three independent equations of off-diagonal terms, each fixing one directional rotation component to be zero. These constraints must be satisfied at every interior vertex of the origami, leading to the size $3|V_\text{in}| \times |E_\text{in}|$ Jacobian matrix $\bdd_\calh$ of the constraints when $X$ where $V_{\text in}$ and $E_{\text in}$ are the sets of internal vertices and edges of the surface\footnote{Internal edges are incident to two faces and internal vertices do not lie on the boundary of $X$.}. This matrix constitutes the hinge model \cite{tachiSimulationRigidOrigami2009}.

If the sheet is punctured and non-simply connected, each puncture induces 6 constraint equations per hole \cite{tachi2010geometric}. These not only include the three orientation-preserving constraints in equation~\eqref{eq:rot_deriv}, but three translational constraints as well. We prove more generally that 6 constraints are required around each generating homology loop $H_1 X$.

\subsection{Spatial Model}
\label{sec:spatial_model}
The constraints on spatial velocities in origami have already been touched upon in Section~\ref{sec:classical}. Suppose that $e\lhd f,g$ are two faces sharing a common hinge $e$. Fixing two reference frames $\sff$ and $\sfg$ at $f$ and $g$, there is a Euclidean transformation ${\bf Q}(q)\in SE(3)$ between the two, dependent on state variables $q$. It is not hard to see that the family of transformation ${\bf Q}(q)$ encodes a one parameter submanifold of $SE(3)$ isomorphic to the circle group $S^1$.

The previous equation~\eqref{eq:connecting} embodies this Pfaffian constraint. For an edge with endpoints $u,v \lhd e$ let $l_e$ denote the unit normalization of the vector $\pm (p_u - p_v)$ (This convenient notation will only be used when the sign does not matter). If $\nu = [\omega,\beta]^\top$ is a spatial velocity then let $\pi_e$ be the size $5\times 6$ matrix projecting on to the subspace $(\text{span}\{[l_e,0]^\top\})^\perp$. Here $\omega$ is mapped to its projection $[\omega]$ on the subspace $l_e^\perp$ orthogonal to the hinge axis $l_e$, while $\beta$ is left unchanged. If $\nu_f$ and $\nu_g$ are face spatial velocities, they must satisfy the equation
\begin{equation}
\label{eq:spatial_constraint}
    \pi_e(\Psi_{f,e} \nu_f - \Psi_{g,e} \nu_g) = {\bf 0}.
\end{equation}
Requiring equation~\eqref{eq:spatial_constraint} to be satisfied everywhere leads to a global $5|E_\text{in}| \times 6|F|$ constraint matrix $\bdd_\cals$ on spatial velocities. In the serial chain example of Section~\ref{sec:classical}, it is evident that spatial velocities satisfying equation~\eqref{eq:connecting} must satisfy equation~\eqref{eq:spatial_constraint}.

\subsection{Truss Model}
\label{sec:truss_model}

The truss model is a familiar approach to deriving kinematics, where vertices are free instead of hinges. The length of an edge with endpoints $u,v\lhd e$ is $\|p_u - p_v\|$ which we assume remains constant. Taking the derivative of this constraint equation at $e$ we find the Pfaffian constraint
\begin{equation}\label{eq:edge_constraint}
    \frac{d}{dt} \| p_u - p_v \| = \frac{d}{dt} \sqrt{\langle p_u - p_v, p_u - p_v \rangle} = \left\langle \frac{p_u - p_v}{\|p_u - p_v\|}, \frac{d p_u}{dt} - \frac{d p_v}{dt} \right\rangle = 0.
\end{equation}
in terms of six variables: the three separate directional components of $d p_u/ dt$ and $d p_v / dt$.

\begin{figure}
    \centering
    \includegraphics[scale = 1.2]{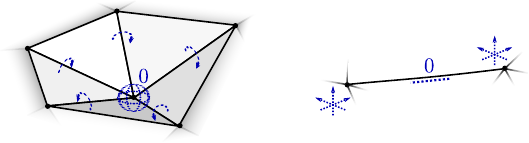}
    \caption{The degrees of freedom and constraints of the hinge model (left) and truss model (right) are displayed. The rotational velocities sum to zero around each vertex (left) and the length of the edge changes by zero (right). The data assignments of Figure~\ref{fig:overview} are divided into degrees of freedom and constraints.}
    \label{fig:constraints}
\end{figure}

It is not sufficient to require the constraint~\eqref{eq:edge_constraint} be satisfied over every edge; there are often unwanted degrees of freedom warping faces in and out of plane. To ensure faces remain rigid we embellish the surface $X$ with ``enough'' edges and vertices. For each face $f$ we add additional vertices $f'$ such that $f'$ is realized out of the plane of the other points $\{p_v: v\lhd f\}$. We further add edges to $X$ ensuring that $\{v: v\lhd f\} \cup \{f'\}$ is a fully connected sub-graph\footnote{
This is sufficient to ensure that each sub-linkage associated to a face $f$ is rigid. Many of these edges may be redundant.
}. The resulting cell complex is a {\it stiffened linkage} $X'$, sketched in Figure~\ref{fig:overview} (right).

Solutions of the truss model must satisfy equation~\eqref{eq:edge_constraint} at every edge of the origami in $X'$, leading to a size $|E'| \times 3|V'|$ matrix of total constraints (in the number of edges and vertices of $X'$). A vector in the null-space of this matrix leaves each sub-linkage (associated to a face) rigid.
\section{Cosheaf Origami}
\label{sec:cosheaves}

The origami constraints developed in Section~\ref{sec:classical_models} can be written in a more abstract and concise topological language using {\em cellular cosheaves}: see the Appendix for an introduction to the theory. 

\subsection{The Hinge Cosheaf}
\label{sec:hinge_cosheaf}

Here we show that the constraints in the rotating hinge model of Section~\ref{sec:hinge_model} can be implemented by a cosheaf.

\begin{definition}[Hinge cosheaf]
    Let $(X,p)$ be an origami surface. The {\it hinge cosheaf} $\calh$ has vertex stalks $\calh_v = so(3)$ the full space of $3\times 3$ skew symmetric matrices and edge stalks $\calh_e = \R = \text{span}\{\dot{\theta}_e\} $ isomorphic to axis aligned spatial angular velocities $\text{span}\{l_e\} \subset so(3)$. The cosheaf $\calh$ has trivial stalks over faces. Between edges and vertices the cosheaf extension map $\calh_{e\rhd v}$ sends the unit hinge velocity $1\in \calh_e$ to $l_e$ in the larger space\footnote{
    To an edge $u,v\lhd e$, both extension maps $\calh_{e\rhd v}$ and $\calh_{e\rhd u}$ are the ``same,'' sending $1$ to the same multiple $l_e$.
    }.
\end{definition}
The hinge cosheaf characterizes the hinge-type constraints of equation~\eqref{eq:rot_deriv}. Here $1$-chains $C_1 \calh$ entail the degrees of freedom of hinges about an axis of rotation while $0$-chains $C_0 \calh$ track angular constraints at vertices.

\begin{lemma}
    Elements of $H_1 \calh$ encode solutions to the rotating hinge model discussed in Section~\ref{sec:hinge_model}.
\end{lemma}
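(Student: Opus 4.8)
The plan is to compute $H_1\calh$ directly from the cellular chain complex of the cosheaf and match the resulting boundary map against the hinge Jacobian $\bdd_\calh$ of Section~\ref{sec:hinge_model}. Recall that the chain complex of $\calh$ has chain groups $C_k\calh = \bigoplus_{\dim c = k}\calh_c$ with boundary maps assembled from the extension maps weighted by incidence numbers (see the Appendix). Since $\calh$ has trivial stalks over faces, $C_2\calh = 0$, so $\partial_2 = 0$ and $B_1\calh = \im\partial_2 = 0$. Consequently $H_1\calh = Z_1\calh/B_1\calh = \ker\partial_1$, and the entire content of the lemma reduces to identifying $\ker\partial_1$ with the solution space of the hinge model.

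First I would unwind the two relevant chain groups in their natural bases: $C_1\calh = \bigoplus_e\calh_e \cong \R^{|E|}$ records a global hinge-velocity vector $\dot\theta = (\dot\theta_e)_e$, while $C_0\calh = \bigoplus_v\calh_v = \bigoplus_v so(3)$ records a skew-symmetric matrix at each vertex. Next I would evaluate $\partial_1$ from the cosheaf boundary formula $\partial_1 = \sum_{v\lhd e}[e:v]\,\calh_{e\rhd v}$. Reading off the $v$-component gives
\begin{equation}
  (\partial_1\dot\theta)_v = \sum_{e\ni v}[e:v]\,\calh_{e\rhd v}(\dot\theta_e) = \sum_{e\ni v}[e:v]\,\dot\theta_e\,l_e,
\end{equation}
where I have used the defining property $\calh_{e\rhd v}(1) = l_e$. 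Here the footnote on the extension maps does the essential work: because $\calh_{e\rhd u}$ and $\calh_{e\rhd v}$ send $1$ to the \emph{same} generator $l_e$, every sign in $(\partial_1\dot\theta)_v$ is carried by the incidence number $[e:v]$, which reorients the axis to point outward from $v$. Thus $(\partial_1\dot\theta)_v$ is exactly the skew-symmetric combination $\sum_i\dot\theta_i(\bdd\bfr/\bdd\theta_i)$ appearing in the loop-closure derivative~\eqref{eq:rot_deriv} at $v$.

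Comparing against Section~\ref{sec:hinge_model}, the matrix of $\partial_1$ in these bases is precisely the hinge Jacobian $\bdd_\calh$: its rows are indexed by the three independent off-diagonal components of $so(3)$ at each vertex, its columns by the hinge velocities $\dot\theta_e$, and its entries record the signed axis $l_e$. Therefore $\partial_1\dot\theta = 0$ holds if and only if the loop-closure constraint~\eqref{eq:rot_deriv} is satisfied at every vertex, i.e.\ if and only if $\dot\theta$ is a solution of the hinge model. Combined with the first paragraph, this yields $H_1\calh = \ker\partial_1 = \ker\bdd_\calh$, which is the claim.

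I expect the main obstacle to be the sign and orientation bookkeeping in the middle step -- verifying that the incidence-weighted extension maps reproduce $\bdd\bfr/\bdd\theta_i$ with globally consistent signs -- together with reconciling the domains: the cosheaf boundary produces a constraint at \emph{every} vertex, whereas $\bdd_\calh$ carries constraints only at interior vertices (and hinge variables only at interior edges). Making the identification exact requires restricting $\calh$ to the interior cells, so that $C_0\calh$ and $C_1\calh$ range over $V_{\text{in}}$ and $E_{\text{in}}$, after which the two boundary operators agree on the nose.
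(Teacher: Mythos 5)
Your proof is correct and takes essentially the same route as the paper's: since $\calh$ has trivial face stalks, $H_1 \calh = \ker \partial_1$, and the cosheaf boundary operator is identified column-by-column with the hinge Jacobian $\bdd_\calh$ via the observation that the skew-symmetric matrix $\partial \mathbf{R}/\partial \theta_e$ in equation~\eqref{eq:rot_deriv} is exactly the extension map $\calh_{e\rhd v}$. Your final paragraph about restricting the chain complex to interior vertices and edges makes explicit a point the paper's proof handles only with the parenthetical ``(interior)''; otherwise the two arguments coincide.
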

\begin{proof}
    The homology space $H_1 \calh = \ker \bdd_\calh$ of the hinge cosheaf $\calh$ consists of the chains with zero boundary in $C_0 \calh$, meaning the sum of the rotations about every vertex $v$ is zero. The constraint that a homology class $\dot{\theta}$ satisfies $\bdd_\calh (\dot{\theta})_v = 0$ at every (interior) vertex is the same as equation~\eqref{eq:rot_deriv} where the skew symmetric matrix $\partial{\bf R} / \partial\dot{\theta_e}$ is the cosheaf map $\calh_{e\rhd v}$.
\end{proof}

\subsection{The Spatial Cosheaf}
\label{sec:kinematic_cosheaf}
We develop a second, more complex model for origami kinematics where cells are modeled as rigid bodies in $\R^3$. The {\it spatial origami cosheaf} over an origami surface $(X,p)$ encodes the degrees of freedom of the rigid body motion of faces.

\begin{definition}[Spatial cosheaf]
\label{def:kinematic}
    Let $(X, p)$ be an origami surface. The {\it spatial cosheaf} $\cals$ has of stalks of spatial vectors $\cals_f = \R^6\iso se(3)$, $\cals_e \iso \R^5 \iso se(3) / \text{span}\{(l_e)_\times\}$, and $\cals_v \iso \R^3 \iso se(3) / so(3)$ over faces, edges, and vertices (as quotient vector spaces). The extension map from a face $f$ to edge $e$ takes value
    \begin{small}\begin{equation*}
        \cals_{f\rhd e}
        \begin{bmatrix}
            \omega \\
            \beta
        \end{bmatrix}_f := 
        \pi_e \Psi_{f,e}
        \begin{bmatrix}
            \omega \\
            \beta
        \end{bmatrix}_f
        = 
        \begin{bmatrix}
            [\omega] & \\
            \beta + \omega_\times(p_e - p_f)
        \end{bmatrix}_e
    \end{equation*}\end{small}
where $\pi_e$ sends $\omega$ to the quotient vector $[\omega] = \omega + \text{span}\{l_e\}$, in matrix form considered as an element of the quotient space $so(3)/\text{span}\{(l_e)_\times\}$; in matrix form this is the same map as in line~\eqref{eq:spatial_constraint}. To vertices we have
    \begin{small}\begin{equation*}
        \cals_{e\rhd v} 
         \begin{bmatrix}
            [\omega] \\
            \beta
        \end{bmatrix}_e :=
        \pi_v \Psi_{e,v}
        \begin{bmatrix}
            [\omega] \\
            \beta
        \end{bmatrix}_e
        = 
        \begin{bmatrix}
            [0] \\
            \beta + \omega_\times(p_v - p_e)
        \end{bmatrix}_v
    \end{equation*}\end{small}
with $[0]$ the trivial zero quotient class. The extension map $\cals_{e\rhd v}$ is well defined because $l_e$ is parallel with the vector $(p_v - p_e)$ causing
    \begin{equation}\label{eq:quotient_reduces}
        [\omega]_\times(p_v - p_e) = (\omega_\times + \text{span}\{(l_e)_\times\})(p_v - p_e) = \omega_\times(p_v - p_e).
    \end{equation}
\end{definition}
It is easy to see that $\dim \cals_f=6$, $\dim \cals_e = 5$, and $\dim \cals_v = 3$ for each face, edge, and vertex. From a choice of spatial velocity vector $\nu_f = [\omega, \beta]^\top_f$ at the face $f$ frame we can recover the induced spatial velocity of any cell along the boundary of $f$. For a vertex $v\lhd e \lhd f$ incident to the face, its change in position is computed directly by the lower $\R^3$ linear velocity component of the spatial vector

\begin{small}\begin{equation}
\label{eq:kin_composition}
    (\cals_{e \rhd v} \cals_{f\rhd e}) x_f
    =
    \cals_{f \rhd v}
     \begin{bmatrix}
        \omega \\
        \beta
    \end{bmatrix}_f
    =
    \begin{bmatrix}
        [0] \\
        \beta + \omega_\times(p_v - p_f)
    \end{bmatrix}_v.
\end{equation}\end{small}
For notation purposes, we let $\eta_{f \rhd v}: \R^6 \to \R^3$ denote the projection of the rigid body operator $\Psi_{f,v}$ to the bottom three linear velocity coordinates
\begin{equation}\label{eq:eta}
    \eta_{f \rhd v}(\nu_f) := \beta + \omega_\times(p_v - p_f),
\end{equation}
the lower terms in line~\eqref{eq:kin_composition}.

We form the spatial cosheaf chain complex $C \cals$, where we are mainly interested in its second homology $H_2 \cals$.
\begin{itemize}
    \item The space $C_2 \cals$ consists of free (angular and spatial) velocity assignments to all faces. In robotics these are typically called {\it external velocity coordinates} (in contrast to internal hinge angular-velocity coordinates --- elements $\dot{\theta}\in C_1 \calh$ of the hinge cosheaf).
    \item The space $C_1 \cals$ consists of constraints between rigid panels. Between two neighboring faces $f,g\rhd e$, the value over the edge measures the failure for the kinematics of $f$ and $g$ to align.
\end{itemize}

\begin{lemma}
    Elements of $H_2 \cals$ are first order kinematic degrees of freedom of the origami (of any oriented manifold topology $X$).
\end{lemma}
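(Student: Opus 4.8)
The plan is to show that $H_2\cals$ coincides, as a vector space, with the solution space of the spatial Pfaffian system~\eqref{eq:spatial_constraint}, which is by definition the space of first-order kinematic degrees of freedom. Since $X$ is a surface there are no cells of dimension greater than two, so the cosheaf chain complex terminates with $C_2\cals\xrightarrow{\bdd_2}C_1\cals\xrightarrow{\bdd_1}C_0\cals$ and there is no degree-three term to quotient by. Consequently $H_2\cals=\ker\bdd_2$, and the entire argument reduces to computing the top boundary map explicitly.

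First I would unwind the definition of $\bdd_2$ in terms of the extension maps of Definition~\ref{def:kinematic}. For a face-velocity assignment $x=(x_f)_{f\in F}\in C_2\cals$, the component of $\bdd_2 x$ over an edge $e$ is the signed sum $\sum_{f\rhd e}[f\!:\!e]\,\cals_{f\rhd e}(x_f)=\sum_{f\rhd e}[f\!:\!e]\,\pi_e\Psi_{f,e}x_f$ of the extensions from the faces incident to $e$, where $[f\!:\!e]$ is the cellular incidence coefficient. The key structural input is that $X$ is an oriented manifold: every interior edge $e$ lies in the boundary of exactly two faces $f,g$, and orientability forces the two coefficients $[f\!:\!e]$ and $[g\!:\!e]$ to have opposite sign. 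Hence the edge component collapses to $\pi_e(\Psi_{f,e}x_f-\Psi_{g,e}x_g)$, which is precisely the left-hand side of the spatial constraint~\eqref{eq:spatial_constraint}.

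With this identification in hand, $\ker\bdd_2$ is exactly the set of face-velocity assignments satisfying~\eqref{eq:spatial_constraint} at every interior edge, i.e. the null space of the global constraint matrix $\bdd_\cals$ from Section~\ref{sec:spatial_model}. By construction this null space is the space of admissible first-order rigid-panel motions, completing the identification $H_2\cals=\ker\bdd_\cals$.

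I expect the main obstacle to be the orientation-and-incidence bookkeeping rather than any single computation: one must verify that the manifold hypothesis guarantees each interior edge meets exactly two faces, and that a global orientation of $X$ yields the cancelling signs that turn a sum of two one-sided extensions into the two-sided difference appearing in~\eqref{eq:spatial_constraint}. A secondary point requiring care is the treatment of boundary edges (those incident to a single face), where no inter-panel constraint should be imposed; here one relies on the fact that the spatial constraint matrix of Section~\ref{sec:spatial_model} ranges only over interior edges $E_\text{in}$, so that free boundary creases contribute no constraint rows and leave the corresponding panel motions unconstrained.
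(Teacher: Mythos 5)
Your proposal is correct and follows essentially the same route as the paper's proof: identify $H_2\cals=\ker\bdd_2$ (there being no $3$-cells), unwind the top boundary map at each interior edge into the two-sided difference $\pi_e(\Psi_{f,e}\nu_f-\Psi_{g,e}\nu_g)$, and recognize this as exactly the spatial constraint~\eqref{eq:spatial_constraint}, so that the kernel is the solution space of the spatial model. You simply make explicit two points the paper leaves implicit --- the cancelling incidence signs coming from orientability and the restriction of constraint rows to $E_{\text{in}}$ --- which is a reasonable elaboration rather than a different argument.
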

\begin{proof}
    Suppose that $\nu\in H_2 \cals = \ker \bdd_\cals$. This means that $\cals_{f\rhd e} \nu_f = \cals_{g\rhd e}\nu_g$ at every two panels sharing a common hinge $e\lhd f,g$, meaning equation~\eqref{eq:spatial_constraint} is satisfied. The induced velocities $\Psi_{f,e} \nu_f$ and $\Psi_{g,e} \nu_g$ at the edge frame $e$ must be equal modulo a multiple of $l_e$, an angular rotation along the hinge axis of $e$. This constraint is descriptive, meaning that all kinematic degrees of freedom themselves are elements of $H_2 \cals$.
\end{proof}

From Section~\ref{sec:spatial_model}, $H_2 \cals$ can be associated with the kernel of the size $5|E_{\text in}| \times 6|F|$ matrix $\bdd_\cals$ and is the first order solution space of an {\it spatial model} for origami. The cosheaf $\cals$ simply encodes the kinematic constraints in a compact algebraic form.

\begin{figure}
    \centering\includegraphics[scale = 1.4]{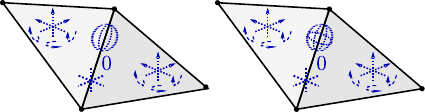}
    \caption{The analogues to Figure~\ref{fig:constraints} for the spatial model $\cals$ (left) and rigid model $\calb$ (right) are pictured. [Left] Faces have six degrees of freedom and are subject to three translational and two rotational constraints over edges where only hinged rotation is permitted. Over the pictured system the homology space $H_2 \cals$ is seven-dimensional, the kernel of the associated size $5 \times 12$ boundary matrix $\bdd_\cals$. Six generators of $H_2 \cals$ characterize global degrees of freedom while the last generator is the hinge action. [Right] As a unified rigid body, edges have six degrees of constraints locking adjacent faces in place relative to each other.}
    \label{fig:euclidean}
\end{figure}

\section{Homology Relations Between Origami Models}
\label{sec:relations}
Now that we have defined three models for origami kinematics (the hinge, truss, and spatial origami models) we now relate the three using homology theory.

\subsection{Relating the Hinge and Spatial Origami Systems}
\label{sec:hinge_kinematic}

Investigating the hinge cosheaf $\calh$ and the spatial cosheaf $\cals$ we see that these are cosheaves of complimentary data. Where $\calh$ describes axis aligned rotation at edges, the cosheaf $\cals$ detects all rigid body motions {\it except} for those axial rotations. Similarly the data of $\calh$ includes pure rotations at vertices while $\cals$ instead describes pure translations at vertices.

To tie $\calh$ and $\cals$ together we define the rigid body cosheaf $\calb$ with additional constraints. All cells are assigned a full six degrees of kinematic data.

\begin{definition}
Let $(X, p)$ be a origami surface. Let the stalks of the {\it rigid body cosheaf} $\calb$ be $\calb_f = \calb_e = \calb_v = \R^6 \iso se(3)$ comprised of spatial velocity vectors. For two cells $c\lhd d$ in $X$ we the extension maps are $\calb_{d,c} = \Psi_{d,c}$.
\end{definition}

We are mainly interested in the second homology $H_2 \calb$, equivalent to the kernel of a size $6|E_{\text in}| \times 6|F|$ matrix $\bdd_\calb$. For $\nu\in H_2 \calb$ to be a homology cycle it is required that $\Psi_{f,e} \nu_f = \Psi_{g,e} \nu_g$ over neighboring faces $e\lhd f,g$. These constraints cascade throughout the surface, meaning $H_2 \calb$ consists of only the six global degrees of translation and rotation. One can picture $H_2 \calb$ as the kinematics of the origami with all of the hinges glued stiff so the structure is reduced to a trivial system.

\begin{lemma}\label{lem:constant_iso}
    There are isomorphisms of cosheaf chain complexes $C \calb \iso C \overline{se(3)}$ inducing isomorphisms on homology.
\end{lemma}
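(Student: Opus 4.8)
The plan is to produce an explicit isomorphism of cosheaves $\phi : \calb \to \overline{se(3)}$ and then invoke the general principle that a cellwise-invertible morphism of cosheaves induces an isomorphism of their chain complexes. Here $\overline{se(3)}$ is the constant cosheaf carrying the stalk $se(3)\iso\R^6$ on every cell with all extension maps equal to the identity. The guiding observation is that the rigid body operators $\Psi_{d,c}$ form a \emph{trivial cocycle}: because they obey the composition law $\Psi_{g,h}\Psi_{f,g}=\Psi_{f,h}$ and the inverse law $(\Psi_{f,g})^{-1}=\Psi_{g,f}$, the twisting they introduce on $\calb$ can be undone by a single global change of frame.

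Concretely, I would fix one global reference frame $\mathsf{0}$ (the standard frame at the origin of $\R^3$) and set the stalk map at each cell $c$ to be $\phi_c := \Psi_{c,\mathsf{0}} : \calb_c \to se(3)$, re-expressing each local spatial velocity in the global frame. Every $\phi_c$ is invertible, with inverse $\Psi_{\mathsf{0},c}$, by the inverse law. Checking that $\phi$ is a cosheaf morphism is then a one-line application of the composition law: for any incidence $c \lhd d$ one needs $\phi_c \circ \calb_{d\rhd c} = \id \circ \phi_d$, that is $\Psi_{c,\mathsf{0}}\,\Psi_{d,c}=\Psi_{d,\mathsf{0}}$, which holds identically. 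This single identity covers the face--edge and edge--vertex incidences at once, so there is no case analysis.

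To upgrade the cosheaf morphism to a chain isomorphism I would recall that the cosheaf boundary $\bdd$ is assembled from the extension maps weighted by cellular incidence numbers $[d:c]$; the morphism condition $\phi_c \calb_{d\rhd c} = \overline{se(3)}_{d\rhd c}\,\phi_d$ makes the square relating $\bdd^{\calb}$ and $\bdd^{\overline{se(3)}}$ commute in each degree, so $\bigoplus_c \phi_c$ is a chain map. Being degreewise an isomorphism, it is an isomorphism of chain complexes and hence induces isomorphisms $H_k \calb \iso H_k \overline{se(3)}$ for all $k$.

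The computation is essentially free, which is the whole point of the lemma; the only places demanding care are bookkeeping. One must (i) pin down the direction convention for extension maps so that $\phi$ intertwines $\calb_{d\rhd c}$ with the identity in the correct order, and (ii) confirm that extending the composition and inverse laws from cell-attached frames to the auxiliary frame $\mathsf{0}$ is legitimate, which it is, since those laws hold for the operators $\Psi$ irrespective of whether a frame is pinned to a cell. I expect neither to be a genuine obstacle: once $\overline{se(3)}$ is recognized as the right target, the triviality of the $\Psi$-cocycle makes the isomorphism automatic, identifying in particular $H_2 \calb \iso H_2(X)\otimes se(3)$, the six global rigid motions of a connected oriented surface.
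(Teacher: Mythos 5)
Your proposal is correct and takes essentially the same approach as the paper: the paper likewise fixes a single global frame at the origin and uses the rigid body operators as stalk maps (it writes the morphism in the opposite direction, $\Psi_{0,c}: \overline{se(3)}_c \to \calb_c$, the inverse of your $\phi_c = \Psi_{c,0}$), with naturality following from the same composition law $\Psi_{d,c}\Psi_{0,d} = \Psi_{0,c}$. The choice of direction is immaterial since both families are cellwise invertible, so the two arguments are interchangeable.
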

\begin{proof}
    The constant cosheaf $\overline{se(3)}$ (see example~\ref{ex:constant}) can be thought of as the rigid body cosheaf $\calb$ where all cell frames are located at the origin $0$. Rigid body operators $\Psi_{0,c}$ at cells $c$ are isomorphisms between stalks sending
    \begin{equation}
        \Psi_{0,c}\begin{bmatrix}
            \omega \\
            \beta
        \end{bmatrix}_0
        =
        \begin{bmatrix}
            {\bf I} & {\bf 0} \\
            (0 - p_c)_\times & {\bf I}
        \end{bmatrix}
        \begin{bmatrix}
            \omega \\
            \beta
        \end{bmatrix}_0
    =
    \begin{bmatrix}
        \omega \\
        \omega \times p_c + \beta
    \end{bmatrix}_c.
    \end{equation}
    Clearly $\Psi_{0,c}{\bf I} = \calb_{d\rhd c} \Psi_{0,d} = \Psi_{0,c}$ over incident cells $c\lhd d$, where ${\bf I} = \overline{se(3)}_{c,d}$ is the identity transformation. This proves that the family of rigid body operators $\Psi_{0,-}$ form a cosheaf isomorphism between the constant cosheaf $\overline{se(3)}$ and $\calb$.
\end{proof}
Lemma~\ref{lem:constant_iso} indicates that choices for cell centroids $p_c$ amount to a choice of bases across all kinematic assignments. For any oriented surface $X$, the second homology of the space $H_2 X$ is one-dimensional consisting of the orientation class. By example~\eqref{ex:constant} we know $H_2 \calb \iso se(3)$, then Lemma~\ref{lem:constant_iso} confirms $H_2\calb \iso H_2 \overline{se(3)} \iso se(3)$ consists of only the trivial global kinematic motions.

The following Lemma proves how the various cosheaf models constructed so far relate, namely that the cell assignments of the hinge and spatial cosheaves $\calh$ and $\cals$ combine to the trivial cell assignments of $\calb$.

\begin{lemma}\label{lem:defined_exact}
    The following is a short exact sequence of cosheaves
    \begin{equation}\label{eq:origami_ses}
    0 \to \calh \xrightarrow{\iota} \calb \xrightarrow{\pi} \cals \to 0
    \end{equation}
    where $\iota$ embeds stalks in the larger space and $\pi$ is the quotient map of stalks seen in Definition~\ref{def:kinematic}.
\end{lemma}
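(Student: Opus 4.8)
The plan is to establish the two properties that characterize a short exact sequence of cellular cosheaves: (i) \emph{stalkwise exactness}, meaning that for every cell $c$ the sequence of vector spaces $0 \to \calh_c \xrightarrow{\iota_c} \calb_c \xrightarrow{\pi_c} \cals_c \to 0$ is exact; and (ii) \emph{naturality}, meaning that $\iota$ and $\pi$ are genuine cosheaf morphisms, i.e. they commute with the extension maps of the three cosheaves over every codimension-one incidence $v\lhd e$ and $e\lhd f$. Since exactness of a sequence of cellular cosheaves is detected stalkwise (see the Appendix), (i) and (ii) together suffice.

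For (i) I would simply read off the three stalk sequences. Over a face $f$ we have $0 \to 0 \to \R^6 \to \R^6 \to 0$, exact because $\calh_f=0$ and $\pi_f$ is the identity. Over an edge $e$ the map $\iota_e$ sends $1\in\R$ to the angular vector $[l_e,0]^\top$, while $\pi_e$ is the quotient $se(3)\to se(3)/\mathrm{span}\{(l_e)_\times\}$; thus $\iota_e$ is injective, $\pi_e$ surjective, and $\im\iota_e=\mathrm{span}\{(l_e)_\times\}=\ker\pi_e$, yielding exactness of $0\to\R\to\R^6\to\R^5\to0$. Over a vertex $v$ the map $\iota_v$ is the inclusion of the angular part $so(3)\hookrightarrow se(3)$ and $\pi_v$ the quotient $se(3)\to se(3)/so(3)$ onto the linear part, so $\im\iota_v=so(3)=\ker\pi_v$ and $0\to\R^3\to\R^6\to\R^3\to0$ is exact. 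Each case is a dimension count plus an identification of the relevant image and kernel.

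For the naturality of $\pi$ the squares are essentially definitional. On the incidence $e\lhd f$ the left vertical map $\pi_f$ is the identity and $\cals_{f\rhd e}=\pi_e\Psi_{f,e}$ by Definition~\ref{def:kinematic}, so $\cals_{f\rhd e}\circ\pi_f=\pi_e\circ\Psi_{f,e}=\pi_e\circ\calb_{f\rhd e}$ on the nose. On the incidence $v\lhd e$ I would compute both composites $\calb_e=\R^6\to\cals_v=\R^3$ on a vector $[\omega,\beta]^\top$: the route through $\calb_v$ gives $\pi_v\Psi_{e,v}[\omega,\beta]^\top=\beta+\omega_\times(p_v-p_e)$, while the route through $\cals_e$ gives $\cals_{e\rhd v}([\omega],\beta)=\beta+[\omega]_\times(p_v-p_e)$; these agree by the well-definedness identity~\eqref{eq:quotient_reduces}.

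The one square requiring genuine geometric input is the naturality of $\iota$ over the incidence $v\lhd e$, which I expect to be the main obstacle. Here one must verify
\begin{equation*}
\iota_v\circ\calh_{e\rhd v} \;=\; \Psi_{e,v}\circ\iota_e \colon \calh_e=\R \to \calb_v=\R^6 .
\end{equation*}
Tracing $1\in\R$ along the top-right path yields the angular vector $[l_e,0]^\top$, whereas the left-bottom path yields $\Psi_{e,v}[l_e,0]^\top=[\,l_e,\;l_e\times(p_v-p_e)\,]^\top$. Equality therefore reduces to $l_e\times(p_v-p_e)=0$, which holds precisely because $v\lhd e$ forces $p_v-p_e$ to be parallel to the edge direction $l_e$ --- the same alignment exploited in~\eqref{eq:quotient_reduces} (and on faces there is nothing to check for $\iota$ since $\calh_f=0$). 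Once this identity is in hand, every naturality square commutes, and together with the stalkwise exactness this establishes~\eqref{eq:origami_ses} as a short exact sequence of cosheaves.
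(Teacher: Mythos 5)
Your proposal is correct and follows essentially the same route as the paper's proof: both verify stalkwise exactness (trivial at faces, $\im\iota_e=\ker\pi_e=\mathrm{span}\{(l_e)_\times\}$ at edges, $\im\iota_v=\ker\pi_v=so(3)$ at vertices) and commutativity of the four naturality squares, with the same key geometric input that $p_v-p_e$ is parallel to $l_e$, which kills the cross-product term in the $\iota$-square over $v\lhd e$ and, via~\eqref{eq:quotient_reduces}, handles the $\pi$-square there as well. The only cosmetic difference is ordering (you do exactness first, the paper does the diagram chase first), which changes nothing of substance.
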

\begin{proof}
We must prove that for every triple of incident cells $v\lhd e\lhd f$ the following diagram commutes with exact rows.
    \begin{equation}\label{eq:comm_diag}
        \begin{tikzcd}
            & 0 \ar[r] \ar[d] & \calb_f \ar[d] \ar[r, "\id"]& \cals_f \ar[d] \ar[r] & 0\\
            0 \ar[r] & \calh_e \ar[r, "\iota_e"] \ar[d] & \calb_e \ar[d] \ar[r, "\pi_e"] & \cals_e \ar[d] \ar[r] & 0\\
            0 \ar[r] & \calh_v \ar[r, "\iota_v"] & \calb_v \ar[r, "\pi_v"]& \cals_v \ar[r] & 0
        \end{tikzcd}
    \end{equation}
    where the vertical maps are the cosheaf extension maps. The top left square of the diagram trivially commutes to 0. Over the lower left square we find for any scalar $\dot{\theta}\in \calh_e = \R$,
    \begin{small}\begin{equation*}
        \left(\calb_{e\rhd v} \iota_e\right)
        (\dot{\theta}_e)
        =
        \dot{\theta}_e
        \begin{bmatrix}
            p_e \\
            (l_e)_\times(p_v - p_e)
        \end{bmatrix}_v
        = \dot{\theta_e}
        \begin{bmatrix}
            p_e \\
            0
        \end{bmatrix}_v
        =
        (\iota_v \calh_{e\rhd v})
        (\dot{\theta}_e)
    \end{equation*}\end{small}

    The top right square of diagram~\eqref{eq:comm_diag} commutes by construction of the extension map $\cals_{f \rhd e}$. For the lower right square, we find
    \begin{small}\begin{equation*}
        \left(\cals_{e\rhd v}  \pi_v\right) 
        \begin{bmatrix}
            \omega\\
            \beta
        \end{bmatrix}_e
        =
        \begin{bmatrix}
            [\omega] \\
            \beta + \omega_\times(p_v - p_e)
        \end{bmatrix}_v \\
       =
        \begin{bmatrix}
            \omega \\
            \beta + \omega_\times(p_v - p_e)
        \end{bmatrix}_v
        =
        \left(\pi_e  \calb_{e\rhd v} \right)
        \begin{bmatrix}
            \omega \\
            \beta
        \end{bmatrix}_e      
    \end{equation*}\end{small}
    where the middle equality follows from equation~\eqref{eq:quotient_reduces}.

    Lastly, the rows of diagram~\eqref{eq:comm_diag} are exact. Clearly $\iota$ is injective and $\pi$ are surjective. The pair $\iota$ and $\pi$ satisfy $\im \iota_e = \ker \pi_e \iso \text{span}\{l_e\}$ at each edge and $\im \iota_v = \ker \pi_v \iso so(3)$ at each vertex.
\end{proof}

The map $\iota:\calh_e \to \calb_e$ on edge stalks is the hinge matrix, described in Section~\ref{sec:classical}. In coordinates this converts hinge angular velocities to spatial velocity vectors.

From the exact sequence of cosheaves~\eqref{eq:origami_ses} we can write its long exact sequence of homology~\eqref{eq:les}. The most important segment is the beginning of the sequence:
\begin{equation}\label{eq:kinematic_les}
    0 \to H_2 \calb \xrightarrow{\pi_*} H_2 \cals \xrightarrow{\vartheta} H_1 \calh \xrightarrow{\iota_*} H_1 \calb \to \dots
\end{equation}

The connecting homomorphism $\vartheta: H_2 \cals \to H_1 \calh$ takes a spatial velocity system and converts spatial vectors to hinge angular velocities. This can be presented as a size $|E_{\text in}| \times 6|F|$ matrix that takes value at a block for incident cells $e\lhd f$:
\begin{small}\begin{equation*}
    \left(\vartheta \begin{bmatrix}
        \omega \\
        \beta
    \end{bmatrix}_f\right)_e
    = 
    [e:f] \langle l_e, \omega \rangle \in \calh_e
\end{equation*}\end{small}
where the sign follows from orientations. A full system of edge rotations $\vartheta \nu$ with $\nu\in H_2 \cals$ is a solution of the hinge model in $H_1 \calh$. The map $\vartheta$ is sketched in Figure~\ref{fig:connecting_hom}.

\begin{figure}
    \centering
    \includegraphics[scale = 1.2]{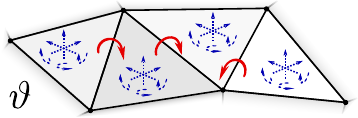}
    \caption{The connecting homomorphism $\vartheta$ sends solutions of the spatial model $\nu\in H_2 \cals$ to solutions of the hinge model $H_1 \calh$ (consisting of angular velocities $\dot{\theta}$). Global velocity vectors at all faces (thought of as elements of $H_2 \calb$) do not bend joints, so $\vartheta$ sends these to zero.}
    \label{fig:connecting_hom}
\end{figure}

\begin{theorem}[Hinge to spatial model]
\label{thm:holes}
    Suppose $(X, p)$ is any oriented origami surface. Then the map $\vartheta^+: \ker \iota_* \to H_2 \cals / se(3)$ is an isomorphism between solutions of the hinge model satisfying $6\dim H_1 X$ additional constraints $\ker \iota_* \subset H_1 \calh$, and solutions to the spatial model up to global spatial velocities $H_2 \cals/ se(3)$. These constraints maintain that there is no accumulated rotation or translation error around any homology loop of $H_1 X$.
\end{theorem}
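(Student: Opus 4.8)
The plan is to derive the theorem as a formal consequence of the long exact sequence~\eqref{eq:kinematic_les} together with the defining properties of the Moore--Penrose pseudo-inverse; no new geometric computation is needed beyond what is already recorded in Lemmas~\ref{lem:defined_exact} and~\ref{lem:constant_iso}. First I would read off from exactness of~\eqref{eq:kinematic_les} the two facts that drive everything: exactness at $H_2\cals$ gives $\ker\vartheta = \im\pi_*$, and exactness at $H_1\calh$ gives $\im\vartheta = \ker\iota_*$. Because the sequence begins $0\to H_2\calb\xrightarrow{\pi_*}H_2\cals$, the map $\pi_*$ is injective, so $\ker\vartheta = \im\pi_* \iso H_2\calb$; and by Lemma~\ref{lem:constant_iso} and the orientation-class computation following it, $H_2\calb\iso se(3)$. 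Under these identifications $\ker\vartheta$ is precisely the copy of $se(3)\subset H_2\cals$ that is quotiented out in the codomain.

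With kernel and image pinned down, the first isomorphism theorem already yields an isomorphism $\overline{\vartheta}: H_2\cals/\ker\vartheta \xrightarrow{\sim} \im\vartheta = \ker\iota_*$, that is, $H_2\cals/se(3)\iso\ker\iota_*$. To justify the constraint count, I would identify the codomain of $\iota_*$: since $\calb\iso\overline{se(3)}$ by Lemma~\ref{lem:constant_iso}, one has $H_1\calb\iso H_1 X\otimes se(3)$, of dimension $6\dim H_1 X$. Thus $\ker\iota_*\subset H_1\calh$ is exactly the solution set of the $6\dim H_1 X$ linear equations given by the components of $\iota_*$ --- the ``no accumulated rotation or translation error around any homology loop'' constraints --- which matches the statement.

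It remains to pass from the abstract quotient isomorphism to the pseudo-inverse $\vartheta^+$. Here I would invoke the standard characterization of the Moore--Penrose pseudo-inverse of a linear map $\vartheta: H_2\cals\to H_1\calh$ between finite-dimensional real inner-product spaces, the inner products arising from the standard cellular-chain bases: $\vartheta^+$ annihilates $(\im\vartheta)^\perp$ and restricts on $\im\vartheta$ to the inverse of the isomorphism $\vartheta|_{(\ker\vartheta)^\perp}:(\ker\vartheta)^\perp\xrightarrow{\sim}\im\vartheta$. Consequently $\vartheta^+|_{\im\vartheta} = \vartheta^+|_{\ker\iota_*}$ is an isomorphism from $\ker\iota_*$ onto $(\ker\vartheta)^\perp$. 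Finally, the orthogonal projection $H_2\cals\to(\ker\vartheta)^\perp$ descends to a natural isomorphism $(\ker\vartheta)^\perp\iso H_2\cals/\ker\vartheta = H_2\cals/se(3)$, and composing identifies $\vartheta^+|_{\ker\iota_*}$ with $\overline{\vartheta}^{-1}$, giving the claimed isomorphism $\vartheta^+|_{\ker\iota_*}:\ker\iota_*\xrightarrow{\sim}H_2\cals/se(3)$.

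The bulk of the argument is thus bookkeeping with the long exact sequence, and I expect the only genuinely delicate point to be the compatibility in the last paragraph: one must check that the complement $(\ker\vartheta)^\perp$ produced by the pseudo-inverse really is identified, via the quotient map, with $H_2\cals/se(3)$ rather than with some other complement of $se(3)$. This rests on the standard fact that the restriction of a quotient map to the orthogonal complement of its kernel is an isomorphism, so the identification is canonical once the inner product is fixed; the substance of the theorem is that this particular complement is the minimal-norm (least-squares) lift, which is exactly what $\vartheta^+$ selects.
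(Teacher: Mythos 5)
Your proposal is correct and follows essentially the same route as the paper's proof: extract $\ker\vartheta = \im\pi_* \iso se(3)$ and $\im\vartheta = \ker\iota_*$ from the long exact sequence~\eqref{eq:kinematic_les}, apply the first isomorphism theorem to get the descended isomorphism $H_2\cals/se(3) \iso \ker\iota_*$, pass to the Moore--Penrose pseudo-inverse, and count constraints via $H_1\calb \iso H_1 X \otimes se(3)$. Your final paragraph actually fills in the one step the paper leaves implicit --- the canonical identification of $(\ker\vartheta)^\perp$ with the quotient $H_2\cals/se(3)$, which is what makes $\vartheta^+|_{\ker\iota_*}$ an isomorphism onto that quotient rather than merely onto some complement of $se(3)$.
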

\begin{proof}
    We know there are isomorphisms $se(3)\iso H_2 \overline{se(3)} \iso H_2 \calb \iso \pi_* H_2 \calb \subset H_2 \cals$. Applied to exact sequence~\eqref{eq:kinematic_les}, the map $\vartheta$ descends to an isomorphism
    \begin{equation}\label{eq:vartheta_iso}
        \vartheta: H_2 \cals/se(3) \to \ker \iota_*.
    \end{equation}
    which then means its Moore-Penrose pseudoinverse $\vartheta^+$ is also an isomorphism. The space $H_1 \calb$ has six dimensions per homology class of $H_1 X$. If $\iota_* (\dot{\theta})$ is the zero class, it must satisfy these $6\dim H_1 X$ additional constraints. Then there must be an element $y\in C_2 \calb$ such that $\bdd (y) = \iota (\dot{\theta})$, in other words, $y$ is a physically realizable kinematic system.
\end{proof}
If $X$ is simply connected, $H_1 X = 0$ and so $H_1 \calb = 0$. In this case line~\eqref{eq:vartheta_iso} is the isomorphism $H_2 \cals/se(3) \iso H_1 \calh$. If $X$ is an origami sheet with $m$ holes, Theorem~\ref{thm:holes} indicates there are $6m$ additional constraints ensuring three degrees of rotational and three degrees of translational alignment around the hole. For an origami surface of genus $g$ there are $12g$ additional constraint equations\footnote{
The additional $6\dim H_1 X$ constraints do not necessarily constitute a minimal set for a hinge model solution to be physical. As can be seen by exactness of sequence~\eqref{eq:kinematic_les}, some of these constraints instead link to degrees of $H_1 \cals$, as $H_1\calb \iso \im \iota_* \oplus \im \pi_*$. This is outside the scope of discussion here.
}.

\subsection{Relating the Spatial and Truss Systems}
\label{sec:truss_kinematic}

We next prove the equivalence between solutions of the spatial and truss kinematic models. Vertices are free to move while constraints lie over (higher-dimensional) edges. This is in contrast with the hinge and spatial models where the constraints were in lower dimensions, indicating that the truss model can be algebraically formulated as a {\it cellular sheaf} instead of a cosheaf\footnote{
Indeed, this {\em truss/linkage sheaf} can shown to be the dual of the {\em force cosheaf} characterizing axially loaded trusses in previous work \cite{TowardsCooperband2023}.
}. However, this abstraction is not necessary for following discussion.

Recall that an origami surface $X$ is embellished with additional vertices and edges to a stiffened linkage $X'$, described Section~\ref{sec:truss_model}. Solutions of the truss model are elements of the kernel of a size $|E'| \times 3|V'|$ block matrix we will call $\bf{M}'$. For $y\in \R^{3|V'|}$, the value of ${\bf M}'y$ at an edge $e$ with incidences $u,v\lhd e$ is
\begin{equation}\label{eq:coboundary}
    ({\bf M}'y)_e := \pm\langle l_e, y_u - y_v \rangle,
\end{equation}
the component length of $y_u-y_v$ parallel to the the edge $e$ where the sign detects orientation alignment. For $y$ to be in the kernel of ${\bf M}'$, line~\eqref{eq:coboundary} must be zero everywhere, matching the previously found constraint equation~\eqref{eq:edge_constraint}. We show an equivalence between the spatial and truss models by constructing an isomorphism $\eta: H_2 \cals \to \ker {\bf M}'$.

\begin{lemma}\label{lem:eta_def}
    Maps of the form $\eta_{f\rhd v}: \cals_f \to \R^3$ defined in line~\eqref{eq:eta} extend to a linear map $\eta: H_2 \cals \to \R^{3|V'|} = \text{Domain}({\bf M}')$.
\end{lemma}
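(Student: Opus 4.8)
The plan is to verify that the candidate assignment sending each vertex $w$ of $X'$ to $\eta_{f\rhd w}(\nu_f)$, where $f$ ranges over faces of $X$ incident to $w$, is independent of the choice of $f$; linearity is then automatic, since each $\eta_{f\rhd w}$ is linear in $\nu_f$ by formula~\eqref{eq:eta} and $\eta$ is assembled coordinate-by-coordinate over the vertices of $X'$. First I would unwind the hypothesis: a chain $\nu = (\nu_f)_f \in H_2\cals = \ker\bdd_\cals$ is exactly one satisfying $\cals_{f\rhd e}\nu_f = \cals_{g\rhd e}\nu_g$ in $\cals_e$ for every interior edge $e\lhd f,g$, i.e.\ the spatial constraint~\eqref{eq:spatial_constraint}. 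For the face-center vertices $f'$ adjoined in forming $X'$ there is nothing to check: each $f'$ is incident to the single face $f$, so $\eta_{f\rhd f'}(\nu_f) = \beta_f + (\omega_f)_\times(p_{f'}-p_f)$ is unambiguous (the same formula~\eqref{eq:eta} evaluated at the realized coordinate $p_{f'}$).

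The main step is the consistency condition at an original vertex $v$ incident to two faces $f,g$ sharing an edge $e$ with $v\lhd e$. Applying the extension map $\cals_{e\rhd v}$ to both sides of the edge constraint $\cals_{f\rhd e}\nu_f = \cals_{g\rhd e}\nu_g$ yields $\cals_{e\rhd v}\cals_{f\rhd e}\nu_f = \cals_{e\rhd v}\cals_{g\rhd e}\nu_g$. By the composition computed in~\eqref{eq:kin_composition}, the lower $\R^3$ component of the left-hand side is precisely $\eta_{f\rhd v}(\nu_f)$ and of the right-hand side is $\eta_{g\rhd v}(\nu_g)$; the upper components are both the trivial class $[0]$. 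Hence $\eta_{f\rhd v}(\nu_f) = \eta_{g\rhd v}(\nu_g)$. It is crucial here that the edge-level equality holds only modulo the axial rotation $\text{span}\{l_e\}$, yet this ambiguity is annihilated at the vertex: since $l_e$ is parallel to $p_v - p_e$, equation~\eqref{eq:quotient_reduces} shows $\cals_{e\rhd v}$ kills the quotient direction, so the induced vertex velocity is genuinely face-independent rather than defined only up to rotation.

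To conclude for a vertex incident to more than two faces, I would propagate this pairwise equality around the star of $v$. Because $X$ is a locally-planar orientable surface, the faces incident to an interior vertex $v$ admit a cyclic ordering $f_0,\dots,f_{k-1}$ in which consecutive faces $f_i,f_{i+1}$ share an edge incident to $v$ (for a boundary vertex these form a path instead of a cycle). The pairwise argument gives $\eta_{f_i\rhd v}(\nu_{f_i}) = \eta_{f_{i+1}\rhd v}(\nu_{f_{i+1}})$ along the whole chain, and since each relation is a genuine equality the cycle closes with no accumulation, so every incident face assigns the same velocity to $v$. Setting $\eta(\nu)_w$ to this common value for each vertex $w$ of $X'$ and stacking the coordinates produces the linear map $\eta: H_2\cals \to \R^{3|V'|}$.

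The step I expect to be the main obstacle is making the propagation around the star rigorous: one must invoke the manifold (locally-planar, orientable) hypothesis on $X$ to guarantee that the faces incident to $v$ form a single connected chain of edge-adjacencies — a non-manifold vertex whose link is disconnected would decouple the faces and break well-definedness — and one must confirm that the vanishing in~\eqref{eq:quotient_reduces} is exactly what upgrades the edge-level equalities-up-to-axial-rotation into the exact vertex-level equalities that chain consistently.
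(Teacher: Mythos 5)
Your proof is correct and follows essentially the same route as the paper's: use the cycle condition $\cals_{f\rhd e}\nu_f = \cals_{g\rhd e}\nu_g$ at each shared edge, push it down to the vertex via $\cals_{e\rhd v}$ (where equation~\eqref{eq:quotient_reduces} annihilates the axial-rotation ambiguity, exactly as you note), and observe that the added vertices $f'$ need no check. The only difference is that you spell out the propagation of pairwise equalities around the star of a vertex using the surface hypothesis, a step the paper's proof leaves implicit when it passes from the two-face case to full face-independence.
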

\begin{proof}
    First fix a face $f$ incident to vertex $v$. By its construction in line~\eqref{eq:eta} $\eta$ is independent of choice of incident edge $v\lhd e_1, e_2 \lhd f$, as by cosheaf functoriality
    \[ \cals_{e_1\rhd v}\circ \cals_{f \rhd e_1} = \cals_{e_2\rhd v}\circ \cals_{f \rhd e_2}.\]
    
    Fix a homology class $\nu\in H_2 \cals$ and suppose that $v\lhd e \lhd f,g$ are two faces incident to vertex $v$. Then for $\nu$ to be a cycle it must be true that $\nu_f - \nu_g = [\omega_f - \omega_g, \beta_f - \beta_g]^\top$ is an axial rotation about the hinge $e$ and lies in $\text{span}\{[l_e]_\times\}$. Consequently $(\omega_f - \omega_g) \times (p_v - p_e) = 0$ and
    \[\cals_{f \rhd e} \nu_f - \cals_{g \rhd e} \nu_g = 0\]
    proving that $\eta$ is independent of choice of face incident to $v$. It is easy to see that $\eta$ can also be extended to the added vertices $f'$, so $\eta$ is a well defined map to the domain of ${\bf M}'$.
\end{proof}

\begin{figure}
    \centering
    \includegraphics[scale = 1.2]{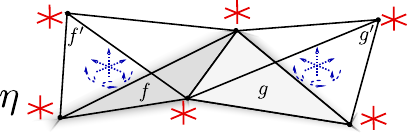}
    \caption{The linear map $\eta$ sends solutions of the spatial model $H_2 \cals$ (consisting of $se(3)$ assignments to face centroids $p_f$ satisfying constraints) to solutions of the truss model $\ker {\bf M}'$ (consisting of $\R^3$ motion assignments to vertices leaving edge lengths constant). This map $\eta$ is always an isomorphism.}
    \label{fig:enter-label}
\end{figure}

Now that $\eta$ is well defined, we show that $\eta$ extends actually maps to the kernel of ${\bf M}'$ and is both injective and surjective onto this codomain.

\begin{theorem}[Spatial to truss model]\label{thm:eta_hom}
    The linear map $\eta$ of Lemma~\ref{lem:eta_def} is an isomorphism $\eta: H_2 \cals \to \ker {\bf M}'$ between solutions of the spatial model and solutions of the truss model.
\end{theorem}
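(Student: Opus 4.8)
The plan is to establish three facts that together say $\eta$ is an isomorphism onto $\ker {\bf M}'$: that $\eta(\nu)$ genuinely satisfies the truss constraints for every $\nu\in H_2\cals$, that $\eta$ is injective, and that $\eta$ is surjective onto $\ker{\bf M}'$. Throughout I use that $H_2\cals=\ker\bdd_\cals$ sits inside the $2$-chains $C_2\cals$ (there are no $3$-cells), so a class $\nu$ is literally an assignment $f\mapsto\nu_f\in\cals_f=\R^6$ subject to the cycle condition $\cals_{f\rhd e}\nu_f=\cals_{g\rhd e}\nu_g$ at each interior fold $e\lhd f,g$. For the first fact, the key observation is that every edge of $X'$ joins two vertices rigidly attached to a common face $f$: this holds for the added stiffening edges by construction and for the original folds because their endpoints lie on both adjacent faces. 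For such an edge $e$ with endpoints $u,v$, computing the relative vertex velocity through that single face via line~\eqref{eq:eta} gives
\[
\eta_{f\rhd u}(\nu_f)-\eta_{f\rhd v}(\nu_f)=\omega_f\times(p_u-p_v),
\]
which is orthogonal to $p_u-p_v$ and hence to $l_e$, so $({\bf M}'\eta(\nu))_e=\pm\langle l_e,\,\eta_{f\rhd u}(\nu_f)-\eta_{f\rhd v}(\nu_f)\rangle=0$. In short, $\eta(\nu)$ restricted to any one face's sub-linkage is a rigid body velocity, which automatically fixes all edge lengths within it.

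For injectivity, suppose $\eta(\nu)=0$ and fix a face $f$, so $\eta_{f\rhd v}(\nu_f)=0$ at every $v\lhd f$ together with the out-of-plane apex $f'$. By the non-degeneracy condition~\eqref{eq:nondegen} and the placement of $f'$ off the plane of $f$, these points affinely span $\R^3$. Differencing the vanishing relations $\beta_f+\omega_f\times(p_v-p_f)=0$ forces $\omega_f\times(p_v-p_w)=0$ for all such points; since their differences span $\R^3$ this yields $\omega_f=0$, whence $\beta_f=0$. Thus $\nu_f=0$ for every face and $\eta$ is injective.

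The substantive work is surjectivity. Given $y\in\ker{\bf M}'$, I would reconstruct a candidate $\nu$ face-by-face: restricted to the complete sub-linkage on $\{v:v\lhd f\}\cup\{f'\}$, the chain $y$ is an infinitesimal flex fixing all pairwise distances, and because these points affinely span $\R^3$ the standard rigidity fact (an infinitesimal flex of a complete graph on affinely-spanning points is the restriction of a single element of $se(3)$) produces a unique $\nu_f\in\R^6$ with $\eta_{f\rhd v}(\nu_f)=y_v$ there. It then remains to check that the family $(\nu_f)$ is a cycle. Across a shared fold $e\lhd f,g$ with endpoints $u,v$, both $\nu_f$ and $\nu_g$ induce the same velocities $y_u,y_v$ at the endpoints, so the difference $\Psi_{f,e}\nu_f-\Psi_{g,e}\nu_g$ is a rigid velocity vanishing at two distinct points of the line through $l_e$; such a velocity is a pure rotation about that axis, i.e.\ it lies in $\text{span}\{(l_e)_\times\}$. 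Applying $\pi_e$, which quotients out exactly this subspace, gives $\cals_{f\rhd e}\nu_f=\cals_{g\rhd e}\nu_g$, so $\nu\in H_2\cals$ and $\eta(\nu)=y$.

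I expect the rigidity lemma invoked in the surjectivity step to be the main obstacle: translating ``all pairwise distances of an affinely-spanning point set are infinitesimally fixed'' into ``the motion is the restriction of a single spatial velocity.'' This is precisely where the stiffening construction of $X'$ and the out-of-plane vertex $f'$ earn their keep, and it is the only step that is not a direct computation; every other step reduces to the elementary identity $a\times b\perp b$ together with bookkeeping of which cells share which faces.
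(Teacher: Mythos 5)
Your proposal is correct and follows essentially the same three-part structure as the paper's proof: showing $\eta$ lands in $\ker{\bf M}'$ via the orthogonality $\omega\times(p_u-p_v)\perp l_e$, injectivity via the out-of-plane vertex $f'$ forcing $\omega_f=\beta_f=0$, and surjectivity by face-wise rigid reconstruction of $\nu_f$ from $y$ followed by a cycle check across each fold. The only cosmetic difference is in the cycle check, where you argue geometrically that a rigid velocity vanishing at two distinct points of the hinge line is a pure axial rotation, while the paper reaches the same conclusion by subtracting the explicit coordinate equations~\eqref{eq:bddkv} and~\eqref{eq:bddku}.
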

\begin{proof}
    First we show $\eta$ maps to $\ker {\bf M}'$ by showing ${\bf M}'\circ \eta = 0$. Suppose $\nu\in H_2 \cals$ is a cycle and that $u,v \lhd e \lhd f$ is a edge in the original surface $X$. Setting $\nu_f = [\omega, \beta]^\top$
    for angular velocity vector $\omega\in \R^3$ we have
    \[  (\eta \nu_f)_v = \beta + [\omega]_\times(p_v - p_f)
        \qquad
        (\eta \nu_f)_u = \beta + [\omega]_\times(p_u - p_f)\]
    and the value of ${\bf M}' \circ \eta (\nu)$ at the edge $e$ is
    \begin{equation}\label{eq:is_cycle}
            ({\bf M}' \circ \eta (\nu_f))_e
            = \pm \langle l_e, \eta(\nu_f)_v - \eta(\nu_f)_u \rangle
            = \pm \langle l_e, \omega\times (p_v - p_u) \rangle
            = 0 
    \end{equation}
    where the last equality follows because $(p_v - p_u)$ is a multiple of $l_e$. In other words, an isometry acting on a face $f$ does not change the lengths of edges along its boundary (clearly). The edge lengths of additional edges in $X'$ are not changed showing that ${\bf M}' \circ \eta = 0$ holds true over the extended complex $X'$. Thus $\eta$ maps to $\ker {\bf M}'$.

    Suppose $\eta (\nu)$ is the zero vector in $\R^{3|V'|}$; for $\eta$ to be injective we show that $\nu=0$. This is equivalent to saying that if all vertices of an origami are fixed then the faces are fixed. From the computation in equation~\eqref{eq:is_cycle} we know $\omega_e$ is a multiple of $l_e$. We are assuming the added vertex $f'$ in $X'$ has $(p_{f'} - p_f)$ linearly independent to the span of $(p_v-p_f)$ and $(p_u - p_f)$. Consequently the equation $\eta(\nu)_{f'} = 0$ implies both $\beta$ and $\omega$ are zero. This logic applies at every face.

    To see that $\eta$ is surjective, fix some motion of the extended truss $y\in \ker {\bf M}'$. By construction, the isolated sub-graph of $X'$ containing the vertices $v_i \lhd f$, the vertex $f'$, and their incident edges is rigid in $\R^3$. There then exists an assignment $\xi_f$ to each face comprised of the translation and rotation value induced by $y$ at the point $p_f$ ($\xi_f$ is an infinitesimal isometry determined by $y$) \cite{Angeles2014}. This chain $\xi\in C_2 \cals$ satisfies $\eta \xi = y$.
    
    We must now show that $\xi$ is a cycle in $H_2 \cals$. Suppose $u,v\lhd e\lhd f,g$ is an edge in $X$ with endpoints and faces. Let $k_e$ denote the difference
    \[ k_e = \cals_{f \rhd e} \xi_f - \cals_{g\rhd e} \xi_g \in \cals_e.\]
    where in spatial coordinates we have:
    
    \begin{small}\begin{equation*}
        \xi_f =
        \begin{bmatrix}
            \omega \\
            \beta 
        \end{bmatrix}_f; \qquad
        \xi_g = 
        \begin{bmatrix}
            \sigma \\
            \upsilon
        \end{bmatrix}_g \quad \text{implies} \quad
        k_e = 
        \begin{bmatrix}
            [\omega - \sigma] \\
            \beta - \upsilon + \omega_\times(p_e-p_f) - \sigma_\times(p_e - p_g)
        \end{bmatrix}_e
    \end{equation*}\end{small}
    
    Then $\cals_{e\rhd v} k_e$ is a translation of the node $v$ equal to
    \[\cals_{e\rhd v} k_e = \cals_{e\rhd v} \circ \left(\cals_{f \rhd e}\xi_f - \cals_{g\rhd e} \xi_g\right) = (\eta \xi_f - \eta \xi_g)_v = y_v - y_v = 0\]
    and likewise $\cals_{e\rhd u} k_e = 0$. Examining the bottom three velocity coordinates of $\cals_{e\rhd v} k_e$ and $\cals_{e\rhd u} k_e$ we find the equations
    \begin{equation}\label{eq:bddkv}
        (\omega - \sigma)_\times (p_v-p_e) + \beta - \upsilon + \omega_\times(p_e-p_f) - \sigma_\times(p_e - p_g) = 0
    \end{equation}
    \begin{equation}\label{eq:bddku}
        (\omega - \sigma)_\times (p_u-p_e) + \beta - \upsilon + \omega_\times(p_e-p_f) - \sigma_\times(p_e - p_g) = 0
    \end{equation}
    Subtracting equation~\eqref{eq:bddku} from equation~\eqref{eq:bddkv} we have the equation $(\omega - \sigma)_\times(p_v-p_u) = 0$ and so $(\omega - \sigma)\in \text{span}\{l_e\}$. Utilizing this fact in equation~\eqref{eq:bddkv} we find that both the angular and linear velocity components of $k_e$ are zero, hence $k_e$ is likewise zero. Since the edges and faces $e\lhd f,g$ are general, it follows that $\bdd_\cals (\xi) = 0$ and $\xi$ is a homology cycle.
\end{proof}

Lemma~\ref{lem:eta_def} and Theorem~\ref{thm:eta_hom} are purely local statements and are unaffected by global structure, hence they apply to any origami surface topology. This theorem combined with the prior Theorem~\ref{thm:holes} leads to the main result of Corollary~\ref{thm:full}.

\section{Conclusion}

We have described the linear relationship between solutions to the rotating hinge, spatial, and truss models over any oriented origami surface. This procedure allows for the recovery of the full kinematic data (including translations at faces, edges, and nodes) from only the rotation value of each hinge. The authors would like to extend these methods towards a homological description of (closed-chain) inertial origami dynamics, building upon the tree-augmented approach and spatial operator algebra in \cite{FULTON2021549}. This would involve a fully homological forward kinematics algorithm, not unlike that described in~\cite{jain2010robot}. Moreover, more work must be done to connect the cosheaf ``tangent space'' representations presented with the full configuration space, akin from moving from the Lie algebra to the Lie group as discussed in~\cite{muller2018screw}.

In the Introduction~\ref{sec:intro}, we posited that serial chains and tree-type systems are simple because their underlying topologies are contractible. Making this precise, there is a {\it discrete Morse function} $t: X \to \R$ on cells setting $t(f_i) = i$ and $t(e_i) = i$~\cite{forman2002user}. The induced discrete gradient field flows from the end of the serial chain back to the base, reverse of the directed graph in Figure~\ref{fig:serial} (right). This leads to pairings of adjacent cells $(e_i, f_i)$ that preemptively reduce homology dimensions and ``contracting'' along the flow line. In the computation of $H_2 \cals$ for instance, this pairs a subspace of $\cals_{f_i}$ with $\cals_{e_i}$, reducing the computational cost of later linear algebraic steps. These future ideas highlight the potential for kinematic complexity to be reduced via homological algorithms.

\section*{Acknowledgements}

The authors acknowledge the support of the Air Force Office of Scientific Research under award number FA9550-21-1-0334. The authors are grateful for their illuminating discussions with Professors Tomohiro Tachi and Thomas Hull.

\printbibliography

@book{AlgebraicHatcher2002,
  title = {Algebraic {{Topology}}},
  author = {Hatcher, A.},
  year = {2002},
  publisher = {Cambridge University Press},
  %isbn = {978-0-521-79540-1},
  file = {C:\Users\Zoe\Zotero\storage\2F3P97VL\+Algebraic Topology - Hatcher.pdf}
}

@article{belcastroModellingFoldingPaper2002,
  title = {Modelling the Folding of Paper into Three Dimensions Using Affine Transformations},
  author = {Hull, Thomas C and others},
  year = {2002},
  journal = {Linear Algebra and its Applications},
  volume = {348},
  number = {1-3},
  pages = {273--282},
  publisher = {Elsevier},
  file = {C:\Users\Zoe\Zotero\storage\4H6UTEDC\Belcastro and Hull - 2002 - Modelling the folding of paper into three dimensio.pdf}
}

@article{bhattacharya2015persistent,
  title = {Persistent Homology for Path Planning in Uncertain Environments},
  author = {Bhattacharya, S. and Ghrist, R. and Kumar, V.},
  year = {2015},
  journal = {IEEE Transactions on Robotics},
  volume = {31},
  number = {3},
  pages = {578--590},
  publisher = {IEEE}
}

@article{billera1988homology,
  title = {Homology of Smooth Splines: Generic Triangulations and a Conjecture of {{Strang}}},
  author = {Billera, Louis J},
  year = {1988},
  journal = {Transactions of the American Mathematical Society},
  volume = {310},
  number = {1},
  pages = {325--340},
  %doi = {10.1090/S0002-9947-1988-0965757-9}
}

@article{bodnar2022neural,
  title = {Neural Sheaf Diffusion: {{A}} Topological Perspective on Heterophily and Oversmoothing in GNNs},
  author = {Bodnar, Cristian and Di Giovanni, Francesco and Chamberlain, Benjamin and Li{\`o}, Pietro and Bronstein, Michael},
  year = {2022},
  journal = {Advances in Neural Information Processing Systems},
  volume = {35},
  pages = {18527--18541}
}

@phdthesis{cooperbandCellularCosheavesGraphic2024,
  title = {Cellular {{Cosheaves}}, {{Graphic Statics}}, and {{Mechaincs}}},
  author = {Cooperband, Zoe},
  year = {2024},
  school = {University of Pennsylvania}
}

@phdthesis{Curry2013,
  title = {Sheaves, {{Cosheaves}} and {{Applications}}},
  author = {Curry, Justin},
  year = {2013},
  month = mar
}

@article{davies1981kirchhoff,
  title = {Kirchhoff's Circulation Law Applied to Multi-Loop Kinematic Chains},
  author = {Davies, T. H.},
  year = {1981},
  journal = {Mechanism and Machine Theory},
  volume = {16},
  number = {3},
  pages = {171--183},
  publisher = {Elsevier}
}

@article{de2007coverage,
  title = {Coverage in Sensor Networks via Persistent Homology},
  author = {De Silva, Vin and Ghrist, Robert},
  year = {2007},
  journal = {Algebraic \& Geometric Topology},
  volume = {7},
  number = {1},
  pages = {339--358},
  publisher = {Mathematical Sciences Publishers}
}

@article{demaineZeroAreaReciprocalDiagram2016,
  title = {Zero-{{Area Reciprocal Diagram}} of {{Origami}}},
  author = {Demaine, Erik D and Demaine, Martin L and Huffman, David A and Hull, Thomas C and Koschitz, Duks and Tachi, Tomohiro},
  year = {2016},
  journal = {Proceedings of IASS Annual Symposia},
  volume = {13},
  pages = {1--10},
  abstract = {We propose zero-area reciprocal diagram as a graphical tool for the design of rigid foldable mechanisms. First, we interpret the meaning of zero-area reciprocal diagram as the second-order approximation of rigid foldability of origami crease pattern. We further show the equivalence to rigid foldability for special cases of single-vertex origami and an origami pattern with flat foldable degree-4 vertices. We also show design examples with approximated rigid folding.},
  langid = {english},
  file = {C:\Users\Zoe\Zotero\storage\LXGDSWQN\Demaine et al. - 2016 - Zero-Area Reciprocal Diagram of Origami.pdf}
}

@article{fonsecaOverviewMechanicalDescription2022,
  title = {An Overview of the Mechanical Description of Origami-Inspired Systems and Structures},
  author = {Fonseca, Larissa M. and Rodrigues, Guilherme V. and Savi, Marcelo A.},
  year = {2022},
  month = jun,
  journal = {International Journal of Mechanical Sciences},
  volume = {223},
  number = {107316}
}

@article{forman2002user,
  title = {A User's Guide to Discrete {{Morse}} Theory.},
  author = {Forman, Robin},
  year = {2002},
  journal = {S{\'e}minaire Lotharingien de Combinatoire},
  volume = {48},
  pages = {B48c--35},
  publisher = {Universit{\"a}t Wien, Fakult{\"a}t f{\"u}r Mathematik}
}

@article{FULTON2021549,
  title = {Forward Dynamics Analysis of Origami-Folded Deployable Spacecraft Structures},
  author = {Fulton, JoAnna and Schaub, Hanspeter},
  year = {2021},
  journal = {Acta Astronautica},
  volume = {186},
  pages = {549--561}
}

@inproceedings{ghrist2011network,
  title = {Network Codings and Sheaf Cohomology},
  booktitle = {{{IEICE Proceedings Series}}},
  author = {Ghrist, Robert and Hiraoka, Yasuaki},
  year = {2011},
  volume = {45},
  pages = {266--269},
  publisher = {NOLTA 2011},
  address = {Kobe, Japan}%,
%  %doi = {10.34385/proc.45.A4L-C3}
}

@inproceedings{hansen2019distributed,
  title = {Distributed Optimization with Sheaf Homological Constraints},
  booktitle = {57th Annual {{Allerton}} Conference on Communication, Control, and Computing},
  author = {Hansen, Jakob and Ghrist, Robert},
  year = {2019},
  pages = {565--571},
  %doi = {10.1109/ALLERTON.2019.8919796},
  organization = {IEEE}
}

@phdthesis{hansen2020laplacians,
  title = {Laplacians of {{Cellular Sheaves}}: {{Theory}} and {{Applications}}},
  author = {Hansen, Jakob},
  year = {2020},
  school = {University of Pennsylvania}
}

@misc{hansen2020sheaf,
  title = {Sheaf Neural Networks},
  author = {Hansen, Jakob and Gebhart, Thomas},
  year = {2020},
  eprint = {arXiv:2012.06333},
  archiveprefix = {arXiv}
}

@article{hansen2021opinion,
  title = {Opinion Dynamics on Discourse Sheaves},
  author = {Hansen, J. and Ghrist, R.},
  year = {2021},
  journal = {SIAM Journal on Applied Mathematics},
  volume = {81},
  number = {5},
  pages = {2033--2060},
  publisher = {SIAM},
  %doi = {10.1137/20M1341088}
}

@book{jain2010robot,
  title = {Robot and Multibody Dynamics: Analysis and Algorithms},
  author = {Jain, Abhinandan},
  year = {2010},
  publisher = {Springer Science \& Business Media}
}

@article{jain2011graph,
  title = {Graph Theoretic Foundations of Multibody Dynamics: {{Part I}}: Structural Properties},
  author = {Jain, Abhinandan},
  year = {2011},
  journal = {Multibody System Dynamics},
  volume = {26},
  pages = {307--333},
  publisher = {Springer}
}

@article{kramar2013persistence,
  title = {Persistence of Force Networks in Compressed Granular Media},
  author = {Kram{\'a}r, Miroslav and Goullet, Arnaud and Kondic, Lou and Mischaikow, Konstantin},
  year = {2013},
  journal = {Physical Review E},
  volume = {87},
  number = {4},
  pages = {042207},
  publisher = {APS}
}

@book{lynch2017modern,
  title = {Modern Robotics},
  author = {Lynch, Kevin M and Park, Frank C},
  year = {2017},
  publisher = {Cambridge University Press}
}

@book{milnor1963morse,
  title = {Morse Theory},
  author = {Milnor, John Willard},
  year = {1963},
  number = {51},
  publisher = {Princeton university press}
}

@article{muller2018kinematic,
  title = {Kinematic Topology and Constraints of Multi-Loop Linkages},
  author = {M{\"u}ller, Andreas},
  year = {2018},
  journal = {Robotica},
  volume = {36},
  number = {11},
  pages = {1641--1663},
  publisher = {Cambridge University Press}
}

@article{muller2018screw,
  title = {Screw and {{Lie}} Group Theory in Multibody Kinematics: {{Motion}} Representation and Recursive Kinematics of Tree-Topology Systems},
  author = {M{\"u}ller, Andreas},
  year = {2018},
  journal = {Multibody System Dynamics},
  volume = {43},
  number = {1},
  pages = {37--70},
  publisher = {Springer}
}

@article{muller2020n,
  title = {An {{O}} (n)-Algorithm for the Higher-Order Kinematics and Inverse Dynamics of Serial Manipulators Using Spatial Representation of Twists},
  author = {M{\"u}ller, Andreas},
  year = {2020},
  journal = {IEEE Robotics and Automation Letters},
  volume = {6},
  number = {2},
  pages = {397--404},
  publisher = {IEEE}
}

@article{onodera2019understanding,
  title = {Understanding Diffraction Patterns of Glassy, Liquid and Amorphous Materials via Persistent Homology Analyses},
  author = {Onodera, Y. and Kohara, Shinji and Tahara, Shuta and Masuno, Atsunobu and Inoue, Hiroyuki and Shiga, Motoki and Hirata, Akihiko and Tsuchiya, Koichi and Hiraoka, Yasuaki and Obayashi, Ippei and others},
  year = {2019},
  journal = {Journal of the Ceramic Society of Japan},
  volume = {127},
  number = {12},
  pages = {853--863},
  publisher = {The Ceramic Society of Japan}
}

@article{riess2022diffusion,
  title = {Diffusion of {{Information}} on {{Networked Lattices}} by {{Gossip}}},
  author = {Riess, Hans and Ghrist, Robert},
  year = {2022},
  journal = {arXiv e-prints},
  pages = {arXiv--2204}
}

@article{RiessLatticeTarski,
  title = {Cellular {{Sheaves}} of {{Lattices}} and the {{Tarski Laplacian}}},
  author = {Ghrist, R. and Riess, H.},
  year = {2022},
  journal = {Homology, Homotopy, and Applications 24(1)},
  pages = {325--345}
}

@article{rodriguez1992spatial,
  title = {Spatial Operator Algebra for Multibody System Dynamics},
  author = {Rodriguez, Guillermo and Jain, Abhinandan and {Kreutz-Delgado}, K},
  year = {1992},
  journal = {Journal of the Astronautical Sciences},
  volume = {40},
  number = {1},
  pages = {27--50},
  publisher = {Citeseer}
}

@article{sizemore2019importance,
  title = {The Importance of the Whole: Topological Data Analysis for the Network Neuroscientist},
  author = {Sizemore, Ann E and {Phillips-Cremins}, Jennifer E and Ghrist, Robert and Bassett, Danielle S},
  year = {2019},
  journal = {Network Neuroscience},
  volume = {3},
  number = {3},
  pages = {656--673},
  publisher = {MIT Press One Rogers Street, Cambridge, MA 02142-1209, USA journals-info {\dots}}
}

@article{smale1972mathematical,
  title = {On the Mathematical Foundations of Electrical Circuit Theory},
  author = {Smale, S},
  year = {1972},
  journal = {Journal of Differential Geometry},
  volume = {7},
  number = {1-2},
  pages = {193},
  publisher = {International Press of Boston}
}

@inproceedings{tachi2010geometric,
  title = {Geometric Considerations for the Design of Rigid Origami Structures},
  booktitle = {Proceedings of the International Association for Shell and Spatial Structures ({{IASS}}) Symposium},
  author = {Tachi, Tomohiro},
  year = {2010},
  volume = {12},
  pages = {458--460},
  publisher = {Elsevier Ltd Shanghai, China},
  file = {G:\My Drive\References\Origami\Geometric Considerations DesignOfRigidOrigamiStructures - Tachi - 2010.pdf}
}

@article{tachiSimulationRigidOrigami2009,
  title = {Simulation of Rigid Origami},
  author = {Tachi, Tomohiro},
  year = {2009},
  journal = {Origami4: The Fourth International Conference on Origami in Science, Mathematics, and Education},
  pages = {175--187},
  file = {G:\My Drive\References\Origami\Tachi - 2009 - Simulation_of_Rigid_Origami.pdf}
}

@article{TowardsCooperband2023,
  title = {Towards {{Homological Methods In Graphic Statics}}},
  author = {Cooperband, Zoe and Ghrist, R},
  year = {2023},
  journal = {Journal of the IASS},
  volume = {64},
  number = {218},
  issn = {1028-365X}
}

@book{urai2010fixed,
  title = {Fixed Points and Economic Equilibria},
  author = {Urai, Ken},
  year = {2010},
  volume = {5},
  publisher = {World Scientific}
}

@book{Weibel1994,
  title = {An {{Introduction}} to {{Homological Algebra}}},
  author = {Weibel, Charles A.},
  year = {1994},
  journal = {An Introduction to Homological Algebra},
  publisher = {Cambridge university press},
  %doi = {10.1017/cbo9781139644136},
  abstract = {The landscape of homological algebra has evolved over the past half-century into a fundamental tool for the working mathematician. This book provides a unified account of homological algebra as it exists today. The historical connection with topology, regular local rings, and semi-simple Lie algebras is also described. The first half of the book takes as its subject the canonical topics in homological algebra: derived functors, Tor and Ext, projective dimensions and spectral sequences. Homology of group and Lie algebras illustrate these topics. Intermingled are less canonical topics, such as the derived inverse limit functor lim1, local cohomology, Galois cohomology, and affine Lie algebras. The last part of the book covers less traditional topics that are a vital part of the modern homological toolkit: simplicial methods, Hochschild and cyclic homology, derived categories and total derived functors.}
}

@article{zhu2022review,
  title = {A Review on Origami Simulations: {{From}} Kinematics, to Mechanics, toward Multiphysics},
  author = {Zhu, Yi and Schenk, Mark and Filipov, Evgueni T},
  year = {2022},
  journal = {Applied Mechanics Reviews},
  volume = {74},
  number = {3},
  pages = {030801},
  publisher = {American Society of Mechanical Engineers},
  file = {C:\Users\Zoe\Zotero\storage\IWG8W3JJ\Zhu et al. - 2022 - A review on origami simulations From kinematics, .pdf}
}

@Inbook{Angeles2014,
title="Special Topics in Rigid-Body Kinematics",
bookTitle="Fundamentals of Robotic Mechanical Systems: Theory, Methods, and Algorithms",
year="2014",
publisher="Springer US",
address="Boston, MA",
author="Angeles, Jorge",
pages="323--342"
}

@article{yoon2024tracking,
  title={Tracking the topology of neural manifolds across populations},
  author={Yoon, Iris H. R. and Henselman-Petrusek, Gregory and Yu, Yiyi and Ghrist, Robert and Smith, Spencer LaVere and Giusti, Chad},
  journal={Proceedings of the National Academy of Sciences},
  volume={121},
  number={46},
  pages={e2407997121},
  year={2024},
  publisher={National Academy of Sciences}
%  doi={10.1073/pnas.2407997121}
}

@article{chan2013topology,
  title={Topology of viral evolution},
  author={Chan, Joseph M. and Carlsson, Gunnar and Rabadán, Raúl},
  journal={Proceedings of the National Academy of Sciences},
  volume={110},
  number={46},
  pages={18566--18571},
  year={2013},
  publisher={National Academy of Sciences}%,
%  doi={10.1073/pnas.1313480110}
}

@article{nicolau2011topology,
  title={Topology based data analysis identifies a subgroup of breast cancers with a unique mutational profile and excellent survival},
  author={Nicolau, Monica and Levine, Arnold J. and Carlsson, Gunnar},
  journal={Proceedings of the National Academy of Sciences},
  volume={108},
  number={17},
  pages={7265--7270},
  year={2011},
  publisher={National Academy of Sciences}%,
%  doi={10.1073/pnas.1102826108}
}


\appendix

\section{Basic Homological Algebra}
\label{app:homological}

Homological algebra is an extension of linear algebra. In linear algebra, the focus is on linear transformations between vector spaces -- their decompositions and characteristics. Homological algebra considers {\it systems} of linear transformations over sequences, grids, surfaces, and more. It is thus an ideal construct for serial-chain and origami structures. The standard references in homological algebra \cite{Weibel1994} tend to be more abstract, using more abstract algebra and category theory. This brief primer utilizes simple finite-dimensional real vector spaces for concreteness.

Cellular cosheaves are comprised of distributions of interconnected algebraic and geometric information across cells, along with methods for determining the resulting equilibrium. Underlying the abstract notation is nothing more than vector spaces and linear maps.

\begin{definition}\label{def:cosheaf}
    A (finite vector valued cellular) cosheaf $\calf$ over a cell complex\footnote{
    Here $X$ is a two-dimensional non-degenerate oriented polyhedral surface. See \cite{Curry2013} for the full definition and requirements.}
    $X$ is comprised of the following data. To each cell $c$ in $X$ a finite-dimensional vector space is assigned $\calf_c$ called a {\it stalk}. Where cells are incident $c \lhd d$ a linear {\it cosheaf extension map} is assigned between stalks $\calf_{d \rhd c}: \calf_d \to \calf_c$.
\end{definition}

Cosheaves are {\it functors} from the poset category of $X$ to the category of finite-dimensional vector spaces. This means $\calf_{c\rhd b} \circ \calf_{d\rhd c} = \calf_{d\rhd b}$ and $\calf_{c\rhd c} = \id$ for any incident cells $b\lhd c \lhd d$.

The cosheaf definition is extremely flexible. In practice, a cosheaf is {\it programmed} by choosing particular vector spaces for stalks and particular extension maps. To calculate global properties of a cosheaf, one combines the dispersed stalks to a single structure called a {\it chain complex}, a core construct in homological algebra.

\begin{figure}
    \centering
    \includegraphics[scale = 1.2]{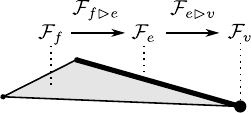}
    \caption{A sketch of a cosheaf over adjacent cells. The stalks $\calf_c$ are vector spaces and the extension maps $\calf_{d\rhd c}$ are linear. There are stalks over other cells in the triangle not explicitly labeled.}
    \label{fig:cosheaf_sketch}
\end{figure}

\begin{definition}
An $i$-dimensional chain of a cosheaf $\calf$ is the combined sum of vectors $x = \sum x_d$ across all stalks $x_d\in \calf_d$ over cells $d$ of dimension $i$. The boundary of $x$ is an $(i-1)$-dimensional chain $\bdd x$ taking value
\begin{equation}\label{eq:bdd}
    (\bdd x)_c := \sum_{c \lhd d} \pm \calf_{d \rhd c} x_d
    \quad\text{ for }\quad
    x\in C_i \calf := \bigoplus_{\dim d = i} \calf_d
\end{equation}
at an $(i-1)$-dimensional cell $c$ where the sign $\pm 1$ detects whether local orientations of the cells $c$ and $d$ agree or disagree. This operation forms a linear {\it boundary map} $\bdd$ between spaces of chains
\begin{equation}\label{eq:chain_complex}
    \dots \to C_2 \calf \xrightarrow{\bdd_2} C_1 \calf \xrightarrow{\bdd_1} C_0 \calf \to 0.
\end{equation}
\end{definition}
    
One can show that $\bdd \circ \bdd = 0$, indicating that the above construction forms a {\it chain complex} ubiquitous in algebraic topology \cite{AlgebraicHatcher2002}. The {\it homology} of a cosheaf $\calf$ is the homology of its chain complex~\eqref{eq:chain_complex}
\begin{equation}\label{eq:homology}
    H_i \calf := \ker \bdd_i / \im \bdd_{i+1}.
\end{equation}
This quotient vector space consists of {\it cycles} (elements of $\ker \bdd_i$) modulo the boundaries of higher-dimensional chains.

\begin{example}\label{ex:constant}
   The most elementary example is the constant cosheaf $\overline{W}$ associated to a finite-dimensional vector space $W$. Over each cell $c$ in $X$ we assign the stalk $\overline{W}_c = W$. All extension maps $\overline{W}_{d\rhd c}$ are the identity transformation. Constant cosheaves capture nothing but the underlying topology; its homology $H_i \overline{W}$ is isomorphic to $\dim W$ copies of the base homology $H_i X$. Standard homology follows when $W=\R$; in this case, the boundary matrix $\bdd_1$ is better known as the {\it signed incidence matrix} in graph theory.
\end{example}

\begin{definition}
    Between two cosheaves $\calf$ and $\calg$ over a cell complex $X$, a {\it cosheaf map} $\varphi: \calf \to \calg$ is comprised of stalk wise component linear maps $\varphi_c: \calf_c \to \calg_c$. The cosheaf map $\varphi$ must additionally satisfy a {\it naturality condition} $\varphi_c \circ \calf_{d\rhd c} = \calg_{d \rhd c} \circ \varphi_d$ over every incident pair of cells $c \lhd d$.
\end{definition}

Often one wishes to consider a cosheaf consisting of a subset of the data in another cosheaf. In this scenario we define an injective cosheaf map $\varphi: \calf \to \calg$, where each leg $\varphi_c$ of the map is an embedding. The complimentary data structure of $\calg$ with respect to $\calf$ is captured in the {\it quotient cosheaf}. Over each cell, we have the data space $\calg_c / \varphi_c\calf_c$ (abbreviated to $\calg_c / \calf_c$), assembling with quotient extension maps.

\begin{definition}\label{def:ses}
    A {\it short exact sequence of cosheaves} is a sequence of cosheaf maps
    \begin{equation}\label{eq:cosheaf_ses}
        0 \to \calf \xrightarrow{\varphi} \calg \xrightarrow{\rho} \calg / \calf \to 0
    \end{equation}
    where at each cell $c$ the stalks form a short exact sequence of vector spaces
    \begin{equation}\label{eq:vec_ses}
        0 \to \calf_c \xrightarrow{\varphi_c} \calg_c \xrightarrow{\rho_c} (\calg / \calf)_c \to 0.
    \end{equation}
    at every stalk. Exactness means at every place of sequence~\eqref{eq:vec_ses} the image of the incoming map ($\varphi_c$)is equal to the kernel of the outgoing map ($\rho_c$).
\end{definition}
Every short exact sequence of cosheaves~\eqref{eq:cosheaf_ses} induces a short exact sequence of cosheaf chain complexes. Leveraging classical homological algebra, from this follows a {\it long exact sequence in homology}
\begin{equation}\label{eq:les}
    \dots \to H_{i+1} \calg / \calf \xrightarrow{\vartheta} H_i \calf \xrightarrow{\varphi_*} H_i \calg \xrightarrow{\rho_*} H_i \calg / \calf \xrightarrow{\vartheta} H_{i-1} \calf \to \dots
\end{equation}
where the linear maps $\vartheta$ are called {\it connecting homomorphisms} and $\varphi_*,\iota_*$ are induced maps on homology. Proof of the existence and exactness of the long exact sequence of homology can be found in any introductory algebraic topology textbook \cite{AlgebraicHatcher2002}.

\end{document}